\documentclass[11 pt]{amsart}
\usepackage{graphicx}
\usepackage{amsmath}
\usepackage[all]{xy}
\usepackage[portuguese, english]{babel}
\usepackage{amssymb,amsmath}
\usepackage{cite}
\usepackage{hyperref}
\usepackage{color}
\usepackage[utf8]{inputenc}
\usepackage{hhline}
\usepackage{fancyhdr, blindtext}

\newtheorem{theorem}{Theorem}[section]
\newtheorem{lemma}[theorem]{Lemma}
\newtheorem{prop}[theorem]{Proposition}

\newtheorem{corollary}[theorem]{Corollary}

\theoremstyle{definition}
\newtheorem{definition}[theorem]{Definition}

\theoremstyle{remark}
\newtheorem{remark}[theorem]{Remark}
\newtheorem*{ackname}{Acknowledgements}
\numberwithin{equation}{section}




\newcommand{\rank}{\operatorname{rank}}

\begin{document}

 \title[Central Configurations and Generic Finiteness]{New equations for central configurations and Generic Finiteness }
\author{Thiago Dias }

\address{Departamento de Matem\'atica, Universidade Federal Rural de Pernambuco - Rua Dom Manuel de Medeiros s/n, 52171-900, Recife, Pernambuco, Brasil}

\email{thiago.diasoliveira@ufrpe.br}
\subjclass[2010]{Primary 70F10, 70F15, 37N05, 14A10.}

\
\keywords{$n$-Body Problem, Central configuration, Celestial Mechanics, Jacobian Criterion, Cayley-Menger Matrix }

\begin{abstract}
We consider the  finiteness problem for central configurations of the $n-$body problem. We prove that, for $n\geq4$, there exists a  (Zariski) closed subset $B$ in the  mass space $\mathbb{R}^{n}$, such that if $(m_1,...,m_n) \in  \mathbb{R}^n\setminus B$, then there is a finite number of corresponding classes of $(n-2)-$dimensional central configurations for potential associated to a semi-integer exponent. Also, we obtain trilinear homogeneous polynomial equations of degree $3$ for central configurations of fixed dimension and, for each integer $k \geq 1$, we show that the set of  mutual distances associated to a $k-$dimensional central configuration is contained in a determinantal algebraic set.
\end{abstract}
\maketitle

\section{Introduction}

Suppose $x_1,...,x_n \in \mathbb{R}^d$  are  the coordinates of punctual particles with respective positive masses $m_1,...,m_n$. The vector $x=(x_1,...,x_n) \in \mathbb{R}^{dn}$ will be called  \emph{configuration}. 

Assume that the mutual distances $r_{ij}=\| x_i-x_j\|$, $1 \leq i < j \leq n$ are non-zero real numbers and consider a family of homogeneous potentials defined by: 
\begin{equation}
U_{a}(x)=\frac{1}{2a+2}{\displaystyle\sum_{i < j }}m_im_jr_{ij}^{2a+2},
\end{equation}
if $a \in \dfrac{\mathbb{Z}}{2}=\{a: 2a \in \mathbb{Z}\}$ and $a \neq -1$, or
\begin{equation}
U_{a}(x)={\displaystyle\sum_{ i < j }}m_im_j\log r_{ij},
\end{equation}
if $a=-1.$
The equations  of  motion of the $n$ particles are given by: 
\begin{equation}\label{eqN}
m_{j}\ddot{x}_{j}= \sum_{\begin{subarray}{c}i=1\\ i\neq j\end{subarray}}^{n}m_{j}m_{i}r_{ij}^{2a}(x_{i}-x_{j})=-\frac{\partial U_{a}}{\partial x_{j}}, \qquad j=1,...,n.
\end{equation}

The \emph{$n-$body problem} consists in  solving the system of equations $\eqref{eqN}$.  When $a=-\frac{3}{2}$ we have the \emph{Newtonian $n$-body Problem}. 
\begin{definition}\label{i3}The configuration $x$, is called \emph{central configuration  associated to the potential} $U_a$, where $2a \in \mathbb{Z}$,    if there exists $\lambda \neq 0$ such that
\begin{equation}\label{eqcc}
\sum_{i\neq j}m_{i}(x_{i}-x_{j})r_{ij}^{2a}+\lambda(x_j-c)=0, \qquad j=1,...,n,
\end{equation}
for which
$$c=\frac{1}{M}\left(m_1x_1+\cdots m_nx_n\right) \qquad \text{and} \qquad M=m_1+\cdots m_n \neq 0$$
are, respectively, the  \emph{center of mass} and the \emph{total mass}. \end{definition} 

We notice that in the case $a=0$ every configuration $x=(x_{1},...,x_n)$ is central,  with $\lambda=M$, and the system \eqref{eqN} is immediately integrated.

An excellent reference  about the role of the exponent $a$ in the discussion of central configurations is \cite{albouy2003paper}.
The set of central configurations is invariant under rotations, translations and dilations, therefore, it is natural to study this  up to these transformations. The problem of counting the solutions of the equations $ (\ref{eqcc}) $ is fundamental in Celestial Mechanics, since  the only known explicit  solutions to the $ n $-body problem are homographic orbits, and these orbits have central configurations  as initial conditions.  In  \cite{chazy1918certaines}, Chazy proposed the following problem, that appears in Smale's list \cite{smale1998mathematical} as:

``For any choice of positive real numbers $m_1,..,m_n$, is the number of central configurations finite? ''

\noindent This question also appears in \cite{wintner1941analytical}. Currently, it is known as Chazy-Wintner-Smale conjecture or finiteness problem.

The dimension of a configuration $x$, denoted by $\delta(x)$, is the dimension of the smallest affine space that contains the points  $x_1,...,x_n \in \mathbb{R}^{d}.$ If $\delta(x)=1,2 \text{ or } 3$, we say, respectively, that the  configuration $x$ is \emph{collinear}, \emph{planar} or \emph{spatial}. 

The dimension of a configuration is an invariant that plays a very important role in obtaining  results  on the finiteness  problem. For $a=-\frac{3}{2}$, Euler showed that, for each triple of positive masses, there exists an unique collinear central configuration of $3$ bodies, up to reordering  masses.  Moulton generalized this result for collinear configurations of $n$ bodies. See \cite{euler1767trium} and \cite{moulton1910straight}. Lagrange proved that for each choice of positive masses $m_{1}$, $m_{2}$ and $m_{3}$, the unique planar central configuration  with $3$ bodies is the equilateral triangle \cite{lagrange1772essai}. More generally, Saari proved that, if we  choose positive masses $m_1,...,m_n$, then the only associated $(n-1)-$dimensional  central configuration is the regular $(n-1)$-dimensional simplex \cite{saari1980role}.  

Recently, Moeckel and Hampton \cite{hampton2006finiteness} proved that there exists a finite number of planar central configurations with four bodies. Albouy and Kaloshin proved, in \cite{albouy2012finiteness}, the generic finiteness for planar central configurations in the five-body problem. 

Moeckel proved, in \cite{moeckel2001generic}, the generic finiteness for central configurations of dimension $\delta(x)=n-2$. In \cite{hampton2011finiteness}, Hampton and Jensen obtained an explicit condition in the masses for generic finiteness of spatial central configurations with $5$ bodies. This strengthened the generic finiteness result of Moeckel in this case.

We would like to emphasize that our inspiration is  Moeckel's article \cite{ moeckel2001generic}. Among other reasons, his work is important because it contains finiteness results obtained using  algebraic geometric tools like the fibre dimension Theorem for quasi-projective varieties.

In the present article, we use the Jacobian criterion to generalize the result of generic finiteness obtained in \cite{moeckel2001generic}  for $n-2$ dimensional central configurations  with semi-integer exponent of the potential. In addition, we prove that $x$ is a $k-$dimensional central configuration  if, and only if, the associated Cayley-Menger matrix has rank $k+2$. This allows us to show that the set of mutual distances associated to a $k-$dimensional central configuration is contained in a determinantal algebraic set. The new equations for  central configurations will be presented in Theorem \ref{gwf} and Proposition $\ref{newequation}$.

We remark that to extend Moeckel's approach  to generic finiteness for  potentials $U_a$, $2a \in \mathbb{Z}$, by a simple adaptation of the argument given in \cite{moeckel2001generic} it is necessary to prove that the resultant of the polynomials $g^{a}_{ij}$ and $F$ defined in section $4$ of this paper is not trivial for every $a$ with $2a \in \mathbb{Z}\setminus\{0\}$. When $a$ is a prime number, replacing the third roots of unity by $|2a|$-th roots of unity in the statement of  Proposition $5$ of \cite{moeckel2001generic}  and using the $|2a|$-th cyclotomic polynomial, we can adapt the proof of this Proposition. This allows to find a point that is not in the zero locus of the resultant. However,  in order to adapt the proof of Proposition $5$  to a general semi-integer potential $a$, we do need to know the coefficients of the $|2a|-$th cyclotomic polynomial and this is not an easy task in general. 

\begin{ackname} I wish to thank  Eudes Naziazeno and Marcelo Pedro for their helpful remarks, Eduardo Leandro for being the advisor on this work, the anonymous referee for the valuable comments and the Department of Mathematics at {\it Universidade Federal Rural de Pernambuco} for their assistance.
\end{ackname}
\section{New equations for central configurations}

 In this section, we obtain trilinear homogeneous polynomial equations of degree $3$ for central configurations of fixed dimension.

Consider a configuration $x=(x_{1},...,x_{n}) \in \mathbb{R}^{dn}$, and $E_j$ the $\mathbb{R}-$vector space generated by  the vectors $x_1-x_j,...,x_n-x_j$ in $\mathbb{R}^{d}$. 

Observe that, $\delta(x)=dim E_j \leq n-1$,  for all $j=1, \cdots, n$. Hence, we may suppose, without loss of generality, that $d=n-k$,  for some $k\geq 1$. Consider the inclusion map of $\mathbb{R}^{n-k}$ into $\mathbb{R}^{n}$:
$$\begin{array}{cccc}
\iota_n:&\mathbb{R}^{n-k}&\longrightarrow& \mathbb{R}^{n}\\
&(a_1,...,a_{n-k})&\longmapsto& (1,a_1,...,a_{n-k},0,...,0).
\end{array}$$
We can identify a configuration $x=(x_1,...,x_n)\in \mathbb{R}^{(n-k)n}$  with a configuration in $\mathbb{R}^{n^{2}}$
by defining $\iota_n(x)=(\iota_n(x_{1}),...,\iota_n(x_n)).$ It is easy to see that $\delta(x)=\delta(\iota_n(x))$. Moreover, since that $\iota_n$ is an injective affine transformation, $x$ and $\tilde{x}$
are equivalent up to symmetries  if, and only if, $\iota_n(x)$ and $\iota_n(\tilde{x})$ are too.

\begin{definition}We say that the $n \times n$ matrix
$$X= \left(\begin{array}{ccc}
    \iota_n(x_1)^t & \ldots & \iota_n(x_n)^t
    \end{array}\right)$$
is the \emph{associated matrix  to the configuration $x$}, and denote its determinant $|X|$ by $w(x)$. 
\end{definition}

 Notice that $\text{dim}E_j+1$ is precisely the number of linearly independent columns of the matrix $X$, so we have: 
\begin{equation}\label{a1}
\delta(x)= \text{rank}(X)-1.
\end{equation}

 Denote by $x_{i_1...i_{k}}$, where $1 \leq i_1<i_2<...<i_k \leq n$, the subconfiguration of $n-k$ bodies obtained from the configuration $x$ by removing the bodies $x_{i_1},...,x_{i_k}$. The $(n-k)\times(n-k)$ configuration matrix of $x_{i_1...i_k}$ will be denoted by
$X_{i_1...i_{k}}$, and we write $w(x_{i_1...i_{k}})=|X_{i_1...i_k}|$. Since the rank of $X$ is $n-k+1$, we have that $|X_{i_{1}...i_{k-1}}|\neq0$, for some choice of $i_1,...,i_{k-1}$.


Suppose that $x$ is a configuration of dimension $n-k$ with mass $m_1,...,m_n$ and consider $X$ its matrix configuration. Define the quantities
\begin{equation}\label{dd2}
S_{ij}=S_{ji}=r_{ij}^{2a}-r_{0}^{2a},~ \text {for } i\neq j, \text{ and }r_{0}^{2a}=M^{-1}\lambda,
\end{equation}
in which $\lambda$ is as in equation $\eqref{eqcc}$. It is easy to see that $x$ is a central configuration with masses $m_1,...,m_n$ if, and only if, satisfies the equations
\begin{equation}\label{c22}
\sum_{\begin{subarray}{c}i=1\\ i\neq j\end{subarray}}^{n}m_{i}S_{ij}(x_{i}-x_{j})=0,~j=1,...,n.
\end{equation}

Consider the $(n-(k+1))$-dimensional exterior product given by
\begin{equation*}
 v^{j}_{i_{1}...i_{k}}=(x_1-x_j)\wedge...\wedge (x_n-x_j),
\end{equation*}
in which the terms  $(x_j-x_j),(x_{i_{1}}-x_j),...,(x_{i_{k}}-x_j)$ were omitted. Taking the wedge product of \eqref{c22} with $v^{j}_{i_{1}...i_{k}}$, we obtain:
\begin{equation} \label{c33}
\sum_{\begin{subarray}{c}i=1\\ i\neq j\end{subarray}}^{n}m_{i}S_{ij}(x_{i}-x_{j})\wedge v^{j}_{i_{1}...i_{k}} =0.
\end{equation}
Define
\begin{equation}\label{xa2}
\Delta_{i_1...i_{k-1}} =(-1)^{\sum_{l=1}^{k-1}i_{l}}w(x_{i_{1}...i_{k-1}}).
\end{equation}

By the Cramer rule, we see that the vector of $\mathbb{R}^{n-k+2}$ with $j-$th coordinates $(-1)^{k_{j}}\Delta_{i_{1}...i_{k_j}ji_{k_{j+1}}...i_{k-2}}$, where
$j \in \{1,...,n\}\setminus \{i_{1},...,i_{k-2}\}$ and $k_j=\#\{i_{l}: i_{l}<j\}$, belongs to the kernel of the matrix $X_{i_{1},...,i_{k-2}}$. Using basic results of exterior algebra, we can prove that the equations \eqref{c33} implies the following Theorem:

\begin{theorem}\label{gwf}
If $x$ is a central configuration of dimension $n-k$ with masses $m_1,...,m_n$, then
\begin{equation}\label{c24}
\sum_{l=1}^{k}(-1)^{l}m_{i_{l}}S_{i_{l}j}\Delta_{i_1...i_{l-1}i_{l+1}...i_{k}}=0.
 \end{equation}

\end{theorem}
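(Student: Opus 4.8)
The plan is to read \eqref{c24} off directly from \eqref{c33} by evaluating each exterior product $(x_i-x_j)\wedge v^{j}_{i_1\ldots i_k}$ explicitly. The first and decisive observation is that almost every term of \eqref{c33} vanishes. By its definition, the multivector $v^{j}_{i_1\ldots i_k}$ already contains as a factor every difference $(x_m-x_j)$ with $m\notin\{j,i_1,\ldots,i_k\}$. Hence, if $i\neq j$ and $i\notin\{i_1,\ldots,i_k\}$, then $(x_i-x_j)$ occurs twice in $(x_i-x_j)\wedge v^{j}_{i_1\ldots i_k}$ and the wedge is zero by the alternating property. Only the $k$ indices $i=i_1,\ldots,i_k$ survive, so \eqref{c33} collapses to
\begin{equation*}
\sum_{l=1}^{k} m_{i_l} S_{i_l j}\,(x_{i_l}-x_j)\wedge v^{j}_{i_1\ldots i_k}=0 .
\end{equation*}

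Next I would identify each surviving wedge with a subconfiguration determinant. All the vectors $x_m-x_j$ lie in the space $E_j$, which has dimension $\delta(x)=n-k$; therefore every $(x_{i_l}-x_j)\wedge v^{j}_{i_1\ldots i_k}$ is a top-degree element of the one-dimensional space $\Lambda^{n-k}(E_j)$, i.e. a scalar multiple of one fixed volume form $\omega$. Inserting $(x_{i_l}-x_j)$ restores exactly the factor that was omitted, so the resulting product is
\begin{equation*}
(x_{i_l}-x_j)\wedge v^{j}_{i_1\ldots i_k}=\pm\!\!\bigwedge_{m\notin\{j,\,i_1,\ldots,\widehat{i_l},\ldots,i_k\}}\!\!(x_m-x_j),
\end{equation*}
a full wedge of $n-k$ differences taken over precisely the bodies retained in the subconfiguration $x_{i_1\ldots\widehat{i_l}\ldots i_k}$. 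The standard identity relating such a wedge to a configuration matrix — subtract the column $\iota_n(x_j)$ from the others and expand along the top row of $1$'s — shows that this wedge equals $\pm\,w(x_{i_1\ldots\widehat{i_l}\ldots i_k})\,\omega$, and hence, by the definition \eqref{xa2}, a signed multiple of $\Delta_{i_1\ldots\widehat{i_l}\ldots i_k}\,\omega$.

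Substituting these evaluations and cancelling the common nonzero form $\omega$ (nonzero precisely because $x$ spans $E_j$) yields a scalar relation of the shape $\sum_{l}\varepsilon_l\,m_{i_l}S_{i_l j}\Delta_{i_1\ldots\widehat{i_l}\ldots i_k}=0$; it remains only to verify that the accumulated sign $\varepsilon_l$ is $(-1)^l$, which gives \eqref{c24}. I expect this sign bookkeeping to be the one real obstacle. The factor $(-1)^l$ must be assembled from three sources: the permutation sign incurred when moving $(x_{i_l}-x_j)$ into its canonical slot inside the ordered wedge, the sign produced when the column $\iota_n(x_j)$ is expanded to convert the wedge into $w$, and the sign $(-1)^{\sum i}$ built into the definition \eqref{xa2} of $\Delta$. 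Matching these against the ordering conventions recorded in the Cramer-rule remark preceding the statement is routine but must be carried out with care; everything else is an immediate consequence of the multilinearity and the alternating property of the exterior product.
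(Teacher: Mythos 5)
Your proposal is correct and takes essentially the same route as the paper: the paper likewise wedges the central-configuration equations \eqref{c22} with $v^{j}_{i_1\ldots i_k}$ so that only the terms with $i\in\{i_1,\ldots,i_k\}$ survive, and then identifies the surviving top-degree wedges in $\Lambda^{n-k}(E_j)$ with the signed subconfiguration determinants $\Delta_{i_1\ldots\widehat{i_l}\ldots i_k}$ via ``basic results of exterior algebra.'' Your outline is in fact more explicit than the paper's own argument, and the sign bookkeeping you defer does close as claimed: the position-dependent signs from inserting $(x_{i_l}-x_j)$ and from expanding the row of $1$'s combine, modulo $2$, into an $l$-independent factor times $(-1)^{l}$, which yields \eqref{c24}.
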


We notice that the last result  generalize the formulae obtained by Williams for central configurations of the five-body problem given by the equations $(5)$ of \cite{williams1938permanent} and provide trilinear homogeneous polynomial equations of degree $3$ for central configurations of fixed dimension that not depends explicitly of $x$.

The next result provides a polynomial parametrization for  $S_{ij}$ from a central configuration of dimension $n-2$. Proofs can be found in \cite{albouy2003paper} and \cite{moeckel2001generic}. 

 \begin{prop} \label{p3} Let $x$ be a central configuration with non-zero masses and  $\delta(x)=n-2$. If $(\Delta_{1},...,\Delta_n) \in \text{Ker}(X)$, define
\begin{equation}
w_{i}:=\frac{\Delta_i}{m_i}, \quad i=1,...,n,
\end{equation} then there exists a constant $\kappa \neq 0$ such that
\begin{equation}\label{2c25}
 S_{ij}=\kappa w_{i}w_{j}.
 \end{equation}
Moreover, at least two of the $w_{i}$'s are non-zero.
\end{prop}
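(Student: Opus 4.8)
The plan is to deduce the entire proposition from Theorem~\ref{gwf} specialised to the case $k=2$, combined with two structural facts about the kernel vector $(\Delta_1,\dots,\Delta_n)$. First I would read off the bilinear identity that Theorem~\ref{gwf} provides when $\delta(x)=n-2$. Choosing the two omitted indices to be $p,q$ and the free index to be $j$, with $p,q,j$ pairwise distinct, equation~\eqref{c24} becomes
$$-\,m_{p}S_{pj}\Delta_{q}+m_{q}S_{qj}\Delta_{p}=0 .$$
Inserting $\Delta_i=m_iw_i$ and cancelling the nonzero factor $m_pm_q$ gives the key relation
$$S_{qj}\,w_{p}=S_{pj}\,w_{q}\qquad\text{for all pairwise distinct } p,q,j ,$$
which says that, for each fixed $j$, the family $(S_{pj})_{p\neq j}$ is proportional to $(w_p)_{p\neq j}$. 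The proposition is exactly the rank-one factorization hidden in this proportionality.

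Before exploiting it I would prove the last assertion, that at least two $w_i$ are nonzero, since this is also what makes the factorization argument legitimate. Because $\iota_n$ places a $1$ in its first coordinate, the top row of $X$ is $(1,\dots,1)$, so every vector of $\operatorname{Ker}(X)$ has coordinates summing to zero; in particular $\sum_i\Delta_i=0$. On the other hand $\operatorname{rank}(X)=\delta(x)+1=n-1$ by \eqref{a1}, so $(\Delta_1,\dots,\Delta_n)$ is a nonzero element of the one-dimensional kernel. A nonzero vector with a single nonzero entry cannot have vanishing coordinate sum, so at least two of the $\Delta_i$, hence at least two of the $w_i$, are nonzero.

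With $P:=\{i:w_i\neq0\}$ and $|P|\geq2$ established, I would build the factorization. For each index $j$ the key relation shows that the ratio $c_j:=S_{pj}/w_p$ is the same for every $p\in P\setminus\{j\}$, so $c_j$ is well defined and $S_{pj}=c_jw_p$ on $P\setminus\{j\}$. Applying the symmetry $S_{ij}=S_{ji}$ to two indices $i,j\in P$ gives $c_jw_i=c_iw_j$, so $c_j/w_j$ is a constant $\kappa$ and $S_{ij}=\kappa w_iw_j$ for $i,j\in P$. When $w_i=0$ I would pick some $p\in P\setminus\{i,j\}$, available because $|P|\geq2$, and the key relation gives $S_{ij}w_p=S_{pj}w_i=0$, forcing $S_{ij}=0=\kappa w_iw_j$; together these cases yield \eqref{2c25} for all $i\neq j$.

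It remains to check $\kappa\neq0$, and here I would argue by contradiction. If $\kappa=0$ then every $S_{ij}=0$, i.e. $r_{ij}^{2a}=r_0^{2a}$ for all pairs; since $a\neq0$ the map $r\mapsto r^{2a}$ is injective on positive reals, so all mutual distances coincide. But $n$ distinct equidistant points are the vertices of a regular simplex, which spans dimension $n-1$, contradicting $\delta(x)=n-2$. I expect the genuine obstacle to be not any one computation but the bookkeeping around the vanishing coordinates: one must verify that $c_j$ is well defined, that enough nonzero $w_i$ survive to propagate the factorization to the zero coordinates, and that the same surplus of nonzero coordinates rules out $\kappa=0$ — and the bound $|P|\geq2$ is precisely what secures all three.
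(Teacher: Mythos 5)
Your proof is correct, but it is worth noting that the paper itself does not prove Proposition \ref{p3} at all: it defers to \cite{albouy2003paper} and \cite{moeckel2001generic}. What you have written is essentially the classical Dziobek argument used in those references, repackaged so that it relies only on material already in the paper. In Moeckel's treatment the proportionality $m_iS_{ij}=c_j\Delta_i$ (for fixed $j$ and $i\neq j$) is read off directly from the fact that, when $\delta(x)=n-2$, the space of affine dependencies among $x_1,\dots,x_n$ is one-dimensional and spanned by $(\Delta_1,\dots,\Delta_n)$; you extract the identical proportionality, in the equivalent determinantal form $S_{qj}w_p=S_{pj}w_q$, from the $k=2$ case of Theorem \ref{gwf}. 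From that point on the two arguments coincide: symmetry of $S_{ij}$ yields the rank-one factorization, the row of ones in $X$ gives $\sum_i\Delta_i=0$ and hence at least two nonzero $\Delta_i$, and $\kappa=0$ is excluded because it would force all mutual distances to be equal, i.e.\ a regular simplex of dimension $n-1$, contradicting $\delta(x)=n-2$. What your route buys is self-containedness: the proposition becomes a corollary of the paper's own Theorem \ref{gwf} rather than an imported fact, and your careful bookkeeping of the set $P=\{i:w_i\neq0\}$ fills in details the references treat briskly. Two points you should make explicit: (i) the hypotheses implicitly require $(\Delta_1,\dots,\Delta_n)\neq 0$ and $a\neq 0$ (for $a=0$ every $S_{ij}$ vanishes identically and the conclusion $\kappa\neq0$ fails), and your argument uses both; (ii) Theorem \ref{gwf} is stated for the particular kernel vector built from the minors in \eqref{xa2}, so one should add the one-line remark that, since $\operatorname{Ker}(X)$ is one-dimensional by \eqref{a1} and the relation \eqref{c24} is homogeneous in the $\Delta$'s, it holds verbatim for the arbitrary kernel vector of the proposition.
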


\section{Determinantal formulas for central configurations}
In this section, we obtain a dimension criterion for central configurations that depends only of their $q=\frac{1}{2}n(n-1)$ mutual distances $r_{ij}$, $1\leq i <j \leq n$, and prove that the set of mutual distances associated to a $k-$dimensional central configuration is contained in a determinantal algebraic set.

The  \emph{Cayley-Menger matrix} associated to the vector $s=(s_{ij})\in \mathbb{C}^{q}$, $1\leq i < j\leq n$ is the $(n+1)\times (n+1)$ symmetric  matrix given by:
$$A(s)=\left(
\begin{array}{cccccc} \label{m.11}
0 &1 &1&1& \ldots  & 1 \\
1 & 0&s_{12}& s_{13}&\ldots & s_{1n} \\
1 & s_{12}&0& s_{23}&\ldots & s_{2n}\\
\vdots &\vdots &\vdots& \vdots & & \vdots\\
1 & s_{1n}&s_{2n}& s_{3n}&\ldots & 0\\
\end{array}
\right).$$
The \emph{Cayley-Menger determinant} is defined by $F(s)= \mid A(s) \mid$.

Index the rows and columns of $A(s)$ from $0$ to  $n$. 
Note that we can associate a configuration $x$ to a Cayley-Menger matrix $A(r)$, also denoted by $A(x)$, where $r=(r_{ij})\in \mathbb{C}^{q}$ is the vector of mutual distances between the points $x_1,...,x_n.$

\begin{prop} \label{moeck1}  
$x$ is a configuration of dimension $\delta(x)=n-2$ if and only if  $\rank(A(x))=n$.
\end{prop}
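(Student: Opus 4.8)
The plan is to prove the stronger identity $\rank(A(x))=\delta(x)+2$, valid for \emph{every} configuration $x$; the asserted equivalence is then exactly the case $\delta(x)=n-2$. I would first pass to the homogenized points $y_i=\iota_n(x_i)$: since the leading coordinate $1$ cancels in $\|y_i-y_j\|^2=\|x_i-x_j\|^2$, the mutual distances, and hence $A(x)$, are unchanged, while $\delta$ is preserved and, by \eqref{a1}, the associated matrix $X$ (whose columns are the $y_i$) satisfies $\rank(X)=\delta(x)+1$. The key elementary input is the polarization identity $r_{ij}^2=\|y_i\|^2+\|y_j\|^2-2\langle y_i,y_j\rangle$, so the off-diagonal entries of $A(x)$ I work with are the squared distances $s_{ij}=r_{ij}^2$; writing $G:=X^{t}X$ for the Gram matrix of the $y_i$ and recalling that over $\mathbb{R}$ one has $\rank(G)=\rank(X)=\delta(x)+1$, the whole statement becomes a rank computation for a bordered version of $G$.

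The computation proceeds by two rank-preserving congruences. Let $\mathbf 1=(1,\dots,1)^{t}$ and $p=(\|y_1\|^2,\dots,\|y_n\|^2)^{t}$, so that the lower-right block of $A(x)$ is $D=\mathbf 1 p^{t}+p\,\mathbf 1^{t}-2G$. Applying the congruence $A(x)\mapsto P^{t}A(x)P$ with $P=\left(\begin{smallmatrix}1 & -p^{t}\\ 0 & I\end{smallmatrix}\right)$ uses the bordering row and column of $1$'s to absorb the two rank-one terms $\mathbf 1 p^{t}+p\,\mathbf 1^{t}$, reducing $A(x)$ to $\left(\begin{smallmatrix}0 & \mathbf 1^{t}\\ \mathbf 1 & -2G\end{smallmatrix}\right)$. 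Since the first row of $X$ is $\mathbf 1^{t}$ we have $\mathbf 1=X^{t}e_1\in\mathrm{Col}(X^{t})=\mathrm{Col}(G)$, so I may write $\mathbf 1=-2Gu$ for some $u$; a second symmetric congruence subtracting the $u$-combination of the last $n$ rows and columns from the border then splits it off completely, giving the block-diagonal form $\left(\begin{smallmatrix} -\mathbf 1^{t}u & 0 \\ 0 & -2G\end{smallmatrix}\right)$.

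Consequently $\rank(A(x))=\rank(G)+1$ \emph{provided} the scalar $c:=-\mathbf 1^{t}u$ is nonzero, and this nondegeneracy is the one point that genuinely needs an argument: it is precisely what forces the all-ones border to raise the rank by one rather than leaving it unchanged (the failure mode being that $\mathbf 1$ is isotropic for $G$). I would settle it by putting $v:=Xu$ and noting that $X^{t}e_1=\mathbf 1=-2Gu=-2X^{t}Xu$ gives $X^{t}(e_1+2v)=0$, so $e_1+2v$ is orthogonal to $\mathrm{Col}(X)\ni v$; pairing yields $\mathbf 1^{t}u=e_1^{t}v=v_1=-2\|v\|^2$, whence $c=2\|v\|^2$, and $v=Xu\neq 0$ since otherwise $\mathbf 1=-2X^{t}(Xu)=0$, absurd. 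Over $\mathbb{R}$ this forces $c\neq 0$, so $\rank(A(x))=\rank(G)+1=(\delta(x)+1)+1=\delta(x)+2$. Specializing, $\rank(A(x))=n$ if and only if $\delta(x)=n-2$, which is the claim. The main obstacle is exactly this last nonvanishing; the two congruences themselves are routine.
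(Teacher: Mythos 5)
Your proof is correct, and it takes a genuinely different route from the paper --- in fact the paper gives no proof of this Proposition at all: it cites \cite{moeckel2001generic} for it, and then spends Lemma \ref{rem} plus an induction on $k$ to obtain the generalization, Proposition \ref{kp11}. Your congruence argument proves the stronger identity $\rank(A(x))=\delta(x)+2$ for every configuration directly, which subsumes both the cited base case and Proposition \ref{kp11}, so under your approach Lemma \ref{rem} and the induction become redundant for that purpose. The details check out: with $G=X^{t}X$ the lower-right block of $A(x)$ is indeed $\mathbf 1 p^{t}+p\,\mathbf 1^{t}-2G$ (including the zero diagonal); the congruence by $P=\left(\begin{smallmatrix}1&-p^{t}\\0&I\end{smallmatrix}\right)$ kills the two rank-one terms; and $\mathbf 1=X^{t}e_{1}\in\mathrm{Col}(X^{t})=\mathrm{Col}(G)$ holds precisely because $X$ has a row of ones (the homogenization $\iota_n$) and because $\rank(X^{t}X)=\rank(X^{t})$ over $\mathbb{R}$, so the second congruence splitting off the scalar block is legitimate. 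You also correctly isolated the one delicate point --- the nonvanishing of $c=-\mathbf 1^{t}u$ --- and your computation $c=2\|Xu\|^{2}>0$, via $X^{t}(e_{1}+2Xu)=0$ and the observation that $Xu=0$ would force $\mathbf 1=0$, is sound; note that this step, like the rank equality for Gram matrices, uses positive definiteness of the real inner product, which is fine since configurations are real. The trade-off: the paper's route rests on Moeckel's result as a black box plus elementary kernel bookkeeping, while yours is self-contained linear algebra delivering the full rank formula in one stroke; if incorporated into the paper, Proposition \ref{kp11} could be restated as an immediate corollary.
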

This Proposition is demonstrated in \cite{moeckel2001generic}.  As an immediate consequence, we have that the Cayley-Menger determinant vanishes at all  configurations of dimension $n-2$. Our objective in this section is to generalize this result.

\begin{lemma}\label{rem}
Let $x=(x_1,...,x_n)$ be a configuration, $X$ its configuration matrix and $A(x)$, the associated Cayley-Menger matrix. If the vectors
$$v_{1}=(v_{11},...,v_{1n}),...,v_{k}=(v_{k1},...,v_{kn})$$
are linearly independent and belong to the  kernel of $X$, then the vectors
$$\widetilde{v}_{1}=(-\sum_{i=1}^{n}\|x_{i}\|^{2}v_{1i},v_{11},...,v_{1n}),...,\widetilde{v}_{k}=(-\sum_{i=1}^{n}\|x_{i}\|^{2}v_{ki},v_{k1},...,v_{kn})$$
are also linearly independent and belong to the kernel of $A(x)$. In particular, $dim(Ker(X))\leq dim(Ker(A(x))).$
\end{lemma}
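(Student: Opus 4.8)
The plan is to prove the inequality by exhibiting an explicit injective linear map from $\ker(X)$ into $\ker(A(x))$, namely the assignment $v \mapsto \widetilde{v}$ written in the statement, and then checking directly that its image lands in $\ker(A(x))$. The dimension bound is an immediate consequence of injectivity, so all the content is in verifying membership in the kernel.

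First I would record a concrete description of $\ker(X)$. Reading off the rows of $X$ — a first row of ones, the $n-k$ rows carrying the coordinates of the points, and $k-1$ zero rows — a vector $v=(v_1,\dots,v_n)$ lies in $\ker(X)$ exactly when
$$\sum_{i=1}^n v_i = 0 \qquad\text{and}\qquad \sum_{i=1}^n v_i x_i = 0 \in \mathbb{R}^{n-k}.$$
These two ``balance'' conditions are the only input the computation will use. Next I verify that $\widetilde{v} = (-\sum_i \|x_i\|^2 v_i,\, v_1,\dots,v_n)$ is annihilated by $A(x)$, treating the $0$-th row and the remaining rows separately. The $0$-th row of $A(x)$ is $(0,1,\dots,1)$, so its pairing with $\widetilde{v}$ equals $\sum_i v_i$, which vanishes by the first balance condition. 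For a row $j\ge 1$, whose entry in position $0$ is $1$ and whose entry in position $i$ is the squared distance $\|x_j-x_i\|^2$, the pairing with $\widetilde{v}$ is $-\sum_i \|x_i\|^2 v_i + \sum_i \|x_j-x_i\|^2 v_i$.

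The crucial step is to expand $\|x_j-x_i\|^2 = \|x_j\|^2 - 2\langle x_j,x_i\rangle + \|x_i\|^2$ and distribute the sum: the $\|x_j\|^2$ term contributes $\|x_j\|^2 \sum_i v_i = 0$, the cross term contributes $-2\langle x_j, \sum_i v_i x_i\rangle = 0$, both by the balance conditions, and the surviving $\sum_i \|x_i\|^2 v_i$ cancels the contribution of the $0$-th coordinate of $\widetilde{v}$. Hence every row pairs to zero and $\widetilde{v}\in\ker(A(x))$. Linear independence and the dimension bound then come for free: since the last $n$ coordinates of $\widetilde{v}$ reproduce $v$ and $\widetilde{v}$ depends linearly on $v$, the map $v\mapsto\widetilde{v}$ is an injective linear map $\ker(X)\hookrightarrow\ker(A(x))$, so independent $v_1,\dots,v_k$ yield independent $\widetilde{v}_1,\dots,\widetilde{v}_k$ and $\dim\ker(X)\le\dim\ker(A(x))$. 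I do not expect a genuine obstacle here; the only delicate point is the row-$j$ computation, where one must see that the $0$-th coordinate of $\widetilde{v}$ is chosen precisely to absorb the leftover term $\sum_i\|x_i\|^2 v_i$ — which is exactly what forces the polarization identity to collapse to zero once the two balance conditions are imposed.
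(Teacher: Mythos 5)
Your proposal is correct and follows essentially the same route as the paper's own proof: both verify membership in $\ker(A(x))$ by expanding $\|x_i-x_j\|^2$ via the polarization identity and invoking the two balance conditions $\sum_j v_j=0$ and $\sum_j v_j x_j=0$ coming from $\ker(X)$, and both deduce linear independence from the fact that the last $n$ coordinates of $\widetilde{v}$ recover $v$. Your write-up is merely a bit more explicit than the paper's (you spell out the balance conditions and check the $0$-th row separately), but the argument is the same.
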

\begin{proof}
Firstly, observe that the  $\widetilde{v_{1}},...,\widetilde{v}_{k}$ belong to the kernel of $A$. In fact,   
\begin{equation*} 
\sum_{j=1}^{n}s_{ij}v_{ij}=\|x_{i}\|^{2}\sum_{j=1}^{n}v_{ij}-2x_{i}\cdot\sum_{j=1}^{n}v_{ij}x_{j}+\sum_{j=1}^{n}\|x_{j}\|^{2}v_{ij}=\sum_{j=1}^{n}\|x_{j}\|^{2}v_{ij}.
\end{equation*}
So, the vectors  $\widetilde{v_{1}},...,\widetilde{v}_{k}$ satisfy the linear equations of  $Ker(A(x))$.
A null linear combination of the vectors $\widetilde{v_{1}},...,\widetilde{v}_{k}$ allows us to obtain a null linear combination of the vectors $v_1,...,v_k$.
In this way, since $v_{1},...,v_{k} $ are linearly independent, so are $\widetilde{v}_{1},...,\widetilde{v}_{k}$. In particular, if $\{v_{1},...,v_{k}\}$ is a base for $Ker(X)$,
 then $\{\widetilde{v}_{1},...,\widetilde{v}_{k}\}$ are a linearly independent set contained in $\text{Ker}(A(x))$. Hence, $dim(\text{Ker}(x))\leq dim(\text{Ker}(A(x)))$.
\end{proof}

Denote by $F_{ij}(s)$ the cofactor of the entry $a_{ij}$ of $A(s)$. Now, we are ready to prove the main result of this section:

\begin{prop}\label{kp11}
The following conditions are equivalent:
\begin{enumerate}
\item $\delta(x)=n-k$, $k \geq 2$;
\item $\text{rank}(A(x))=n-k+2$.
\end{enumerate}
\end{prop}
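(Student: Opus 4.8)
The plan is to reduce the equivalence of the two conditions to the rank identity $\rank(A(x)) = \rank(X) + 1$, which, combined with the formula $\delta(x) = \rank(X) - 1$ from \eqref{a1}, gives the result by simple arithmetic: if $\delta(x) = n-k$, then $\rank(X) = n-k+1$, so $\rank(A(x)) = n-k+2$, and conversely. Thus the heart of the matter is establishing this single relation between the ranks of the configuration matrix and the Cayley--Menger matrix. Note that Proposition \ref{moeck1} is precisely the special case $k=2$ of what we want, so the aim is to prove the general rank identity by the same circle of ideas.

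First I would control the dimension of the kernel of $A(x)$ from below. Lemma \ref{rem} already supplies the inequality $\dim(\operatorname{Ker}(X)) \leq \dim(\operatorname{Ker}(A(x)))$ by lifting any basis $v_1,\dots,v_r$ of $\operatorname{Ker}(X)$ to linearly independent vectors $\widetilde{v}_1,\dots,\widetilde{v}_r$ in $\operatorname{Ker}(A(x))$. Since $A(x)$ is an $(n+1)\times(n+1)$ matrix while $X$ is $n\times n$, in terms of ranks this reads $\rank(A(x)) \leq \rank(X)+1$. So one direction of the identity is essentially free from the lemma.

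For the reverse inequality $\rank(A(x)) \geq \rank(X)+1$ I would exhibit explicit linearly independent rows (or columns) of $A(x)$. The idea is that the bordered structure of $A(x)$ — the extra row and column of $1$'s together with the $0$ in the top-left corner — should account for exactly one unit of rank beyond what the inner $s_{ij}$-block already carries from $X$. Concretely, one can take a maximal set of independent columns coming from the configuration data and adjoin the $0$-th column (the column of $1$'s), checking that the latter is independent of the former because of the corner entry $0$ in the $(0,0)$ position forcing a nonzero contribution the inner columns cannot produce. Carrying this out amounts to a careful linear-algebra bookkeeping of how the border interacts with the $s_{ij}$-block; the computation $\sum_j s_{ij}v_{ij} = \sum_j \|x_j\|^2 v_{ij}$ from the proof of Lemma \ref{rem} is the key algebraic identity that ties the border to the inner block.

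The main obstacle I anticipate is the reverse inequality: the lemma gives the easy bound, but showing that the kernel of $A(x)$ is not strictly larger than the lift of $\operatorname{Ker}(X)$ requires ruling out ``extra'' kernel vectors of $A(x)$ that do not arise from $\operatorname{Ker}(X)$. The cleanest way around this is probably to argue that any vector $(v_0, v_1, \dots, v_n)$ in $\operatorname{Ker}(A(x))$ must, by the top row of $A(x)$, satisfy $\sum_i v_i = 0$, and then by the remaining rows project to an element of $\operatorname{Ker}(X)$, so that the lift in Lemma \ref{rem} is in fact a bijection onto $\operatorname{Ker}(A(x))$; this upgrades the inequality of that lemma to an equality $\dim(\operatorname{Ker}(A(x))) = \dim(\operatorname{Ker}(X)) + 1$, equivalently $\rank(A(x)) = \rank(X)+1$, and the proposition follows at once.
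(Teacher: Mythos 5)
Your overall strategy --- reducing the proposition to the single rank identity $\rank(A(x)) = \rank(X) + 1$ and then deducing the equivalence by arithmetic with \eqref{a1} --- is sound, and it is genuinely different from the paper's proof, which proceeds by induction on $k$: there, the lower bound $\rank(A(x)) \geq n-k+2$ comes from applying Proposition \ref{moeck1} to a subconfiguration $x_{i_1\ldots i_{k-2}}$ of $n-k+2$ bodies, the upper bound from Lemma \ref{rem}, and the converse direction from the induction hypothesis. However, your proposal has a genuine gap exactly at the point you yourself flag as the main obstacle: the claim that every $(v_0,v_1,\ldots,v_n) \in \operatorname{Ker}(A(x))$ ``projects by the remaining rows to an element of $\operatorname{Ker}(X)$.'' The remaining rows do not say this directly. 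Writing $w=\sum_j v_j x_j$ and using $\sum_j v_j=0$ (from the top row), row $i$ of $A(x)$ expands to $v_0 + \sum_j \|x_j\|^2 v_j - 2\, x_i\cdot w = 0$; so all the rows give you is that $x_i \cdot w$ is the same constant for every $i$, i.e.\ that $w$ is orthogonal to every difference $x_i - x_l$. Membership of $(v_1,\ldots,v_n)$ in $\operatorname{Ker}(X)$ requires $w=0$, and to get that you must add the observation that $w = \sum_j v_j(x_j - x_1)$ itself lies in the span of those differences, whence $w\cdot w = 0$ and, \emph{because the configuration is real and the Euclidean inner product is positive definite}, $w=0$. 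This short argument is the actual crux: it is precisely where the real structure enters (over $\mathbb{C}$ a nonzero isotropic $w$ can occur and surjectivity of the lift fails, along with the rank identity itself), and without it the reverse inequality is asserted rather than proved. Your other suggestion for the reverse inequality (adjoining the column of $1$'s to independent inner columns and citing the corner $0$) does not work as stated either, since a linear combination of the inner columns can take any value in the top entry.

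There is also an arithmetic inconsistency to fix: a bijection $T:\operatorname{Ker}(X)\to\operatorname{Ker}(A(x))$ gives $\dim\operatorname{Ker}(A(x)) = \dim\operatorname{Ker}(X)$, not $\dim\operatorname{Ker}(X)+1$; since $A(x)$ is $(n+1)\times(n+1)$ while $X$ is $n\times n$, it is the \emph{equality} of kernel dimensions that yields $\rank(A(x)) = \rank(X)+1$. As written, the two statements you call ``equivalent'' contradict each other. Once both points are repaired, your argument is complete, and it buys something the paper's induction does not: it identifies $\operatorname{Ker}(A(x))$ exactly, proving in the same stroke the corollary that follows the proposition in the paper, and it avoids the case analysis over subconfigurations.
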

\begin{proof}
We prove this result inductively. Proposition $\ref{moeck1}$ takes care of the case $k=2$. Suppose that the equivalence is proved for $n=2,...,k-1$. We will prove the desired equivalence for $n=k.$

 To prove that (1) implies (2), let $\delta(x)=n-k$. There exist indices $i_1,...,i_{k-2}$ such that the configuration of $n-k+2$ bodies $x_{i_1...i_{k-2}}$ has dimension $\delta(x_{i_1...i_{k-2}})=n-k$. By the Proposition $\ref{moeck1}$, the Cayley-Menger matrix $A(x_{i_1...i_{k-2}})$ has rank $n-k+2$. Hence, there exist $i_0,j_0 \in \{0,1,...,n\}$ such that  $F_{i_0j_0}(x_{i_1...i_{k-2}})$ is a non-zero minor of the matrix $A(x_{i_1...i_{k-2}})$. Observe that $F_{i_0j_0}(x_{i_1...i_{k-2}})$ is the determinant of a square submatrix $(n-k+2)\times(n-k+2)$ of the matrix $A$. Thus we have
$$\text{rank}(A(x))\geq n-k+2.$$
On the other hand, by Lemma $\ref{rem}$, $\text{dim}(\text{Ker}(A(x)))\geq \text{dim}(\text{Ker}(X))=k-1.$ Hence, $$\text{rank}(A(x))=n+1-\text{dim}(\text{Ker}(A(x)))\leq (n+1)-(k-1)=n-k+2.$$

 Next we prove that (2) implies (1). If $\text{rank}(A(x))=n-k+2$, then $\text{dim}(\text{Ker}(A(x)))$ is equal to $k-1$. By Lemma \ref{rem}, $\text{dim}(\text{Ker}(X))\leq k-1.$

Suppose that $\text{dim}(\text{Ker}(X))=l-1$, with $l<k$. From equation \eqref{a1}, we obtain $\delta(x)=n-l$. As $l<k$, by induction hypothesis $\text{rank}(A(x))=n-l+2$, which is a contradiction. Hence $l=k$ and $\text{rank}(A(x))=n-k+2$.
\end{proof}

\begin{corollary}
Let $x$ be a configuration. Then $Ker(A(x))$ and $Ker(X)$ are isomorphic through the linear map
$$\begin{array}{cccc}
 T:& Ker(X)&\rightarrow& Ker(A(x))\\
 &v=(v_1,...,v_n) &\mapsto& \widetilde{v}=(-\sum_{i=1}^{n}\|x_{i}\|^{2}v_{i},v_{1},...,v_{n})
\end{array}$$
\end{corollary}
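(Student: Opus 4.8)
The plan is to read $T$ as the Lemma \ref{rem} construction packaged into a single linear map and then to promote the inequality $\dim(\text{Ker}(X))\leq\dim(\text{Ker}(A(x)))$ recorded there to an equality by a dimension count. First I would dispatch the elementary properties. Linearity of $T$ is immediate from its formula. Injectivity is transparent because the last $n$ coordinates of $\widetilde{v}=T(v)$ are exactly $v_1,\dots,v_n$, so $T(v)=0$ forces $v=0$. That $T$ actually lands in $\text{Ker}(A(x))$ is precisely the first part of the proof of Lemma \ref{rem}: the identity $\sum_j s_{ij}v_j=\sum_j\|x_j\|^2 v_j$, valid for $v\in\text{Ker}(X)$, shows that $\widetilde{v}$ annihilates every row of $A(x)$, while the zeroth row is handled by $\sum_j v_j=0$.

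At this stage $T\colon\text{Ker}(X)\to\text{Ker}(A(x))$ is an injective linear map, hence an isomorphism as soon as the two kernels have equal dimension. To obtain that equality I would set $\delta(x)=n-k$ and combine the two rank formulas already available. On one side, equation \eqref{a1} gives $\text{rank}(X)=\delta(x)+1=n-k+1$, whence $\dim(\text{Ker}(X))=k-1$. On the other side, Proposition \ref{kp11} (with Proposition \ref{moeck1} as the base case $k=2$) gives $\text{rank}(A(x))=n-k+2$, and since $A(x)$ is $(n+1)\times(n+1)$ this yields $\dim(\text{Ker}(A(x)))=(n+1)-(n-k+2)=k-1$. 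The two dimensions coincide, so the injective map $T$ must be surjective.

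The one point requiring a separate word is the extreme case $\delta(x)=n-1$, i.e. $k=1$, which lies outside the hypothesis $k\geq 2$ of Proposition \ref{kp11}. There $\text{rank}(X)=n$, so $\text{Ker}(X)=0$; the configuration is a genuine $(n-1)$-simplex and its Cayley–Menger matrix is nonsingular, so $\text{Ker}(A(x))=0$ as well and $T$ is the trivial isomorphism of zero spaces. I do not expect a real obstacle here: once Lemma \ref{rem} and Proposition \ref{kp11} are in hand the corollary is bookkeeping of dimensions, the only cautions being that $T$ is genuinely injective and that the rank formulas are applied to the correct ambient sizes $n$ and $n+1$.

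Alternatively, I could bypass the dimension count and verify surjectivity directly. Given $\widetilde{u}=(u_0,\dots,u_n)\in\text{Ker}(A(x))$, the zeroth row forces $\sum_j u_j=0$, and expanding $\sum_j s_{ij}u_j$ as in Lemma \ref{rem} reduces rows $1,\dots,n$ to the statement that $x_i\cdot w$ is independent of $i$, where $w=\sum_j u_j x_j$. Pairing this with $\sum_j u_j=0$ gives $\|w\|^2=\sum_j u_j\,(x_j\cdot w)=0$, so $w=0$ over $\mathbb{R}$; then $(u_1,\dots,u_n)\in\text{Ker}(X)$ and $u_0$ is forced to equal $-\sum_i\|x_i\|^2 u_i$, exhibiting $\widetilde{u}=T(u_1,\dots,u_n)$. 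This route is self-contained but leans on positivity of the real inner product, whereas the dimension argument reuses results already proved and goes through verbatim.
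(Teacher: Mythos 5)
Your main argument---Lemma~\ref{rem} supplying the injective linear map $T$ into $\text{Ker}(A(x))$, followed by the dimension count $\dim\text{Ker}(X)=k-1=\dim\text{Ker}(A(x))$ obtained from equation~\eqref{a1} and Proposition~\ref{kp11}---is precisely the argument the paper intends for this corollary, and it is correct; your separate treatment of the case $\delta(x)=n-1$, where both kernels are zero, is a point of care the paper leaves implicit. The alternative direct verification of surjectivity is also sound, but it is not needed once the dimensions are known to agree.
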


Consider the polynomial ring $S=\mathbb{C}[R_{12},...,$ $R_{(n-1)n}]$. Let $A(R)$ be the Cayley-Menger matrix associated to de vector $R=(R^{2}_{12},...,$ $R^{2}_{(n-1)n}) \in S^{q}$. Therefore, every minor determinant of $A(R)$ is a polynomial in $S$.
Let $J_{k}$ be the ideal of $S$ generated by all the $k \times k$ minor determinants of $A(R)$. We denote by $N_{k}\subset \mathbb{C}^{\frac{n(n-1)}{2}}$, the determinantal algebraic set given by the zero locus of the ideal $J_{k}$. The next result provides determinantal formulae for  the  vectors of mutual distances associated to a central configuration.

\begin{prop}\label{newequation}
For each fixed $k \geq1$, let $r=(r_{12},...,r_{(n-1)n})\in \mathbb{R}^{q}\subset\mathbb{C}^{q}$ be a vector consisting of the mutual distances associated to a $k-$dimensional central configuration $x$. Every $(k+3)\times (k+3)$ minor determinant of $A(R)$ evaluated in $r$ is zero. Moreover, $r$ belongs to the quasi-affine algebraic set $N_{k+3}\setminus N_{k+2}$. 
\end{prop}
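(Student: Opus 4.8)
The plan is to reduce the whole statement to the rank criterion of Proposition~\ref{kp11}, combined with the elementary fact that the rank of a matrix equals the largest order of a non-vanishing minor. First I would compute $\rank(A(x))$. Since $x$ is a central configuration of dimension $k$, we have $\delta(x)=k$; writing $\delta(x)=n-(n-k)$ and applying Proposition~\ref{kp11} with its index taken to be $n-k$ (legitimate as long as $n-k\geq 2$, i.e. $k\leq n-2$), we get $\rank(A(x))=(n-(n-k))+2=k+2$. I would also record the bookkeeping point that the entries of $A(R)$ are the formal squares $R_{ij}^2$, so that substituting $R=r$ reproduces exactly the Cayley--Menger matrix $A(x)$ whose off-diagonal entries are $s_{ij}=r_{ij}^2$; hence $A(R)|_{R=r}=A(x)$ and the rank computation transfers verbatim to the evaluated matrix.

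Next I would translate $\rank(A(x))=k+2$ into conditions on minors. By the standard characterization of rank, a matrix has rank $m$ precisely when some $m\times m$ minor is non-zero and every $(m+1)\times(m+1)$ minor vanishes. Applying this with $m=k+2$ yields two facts at the point $r$: every $(k+3)\times(k+3)$ minor of $A(R)$ vanishes at $r$, which is exactly the first assertion of the proposition, and at least one $(k+2)\times(k+2)$ minor of $A(R)$ is non-zero at $r$.

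Finally I would read off the membership. By definition $r\in N_m$ iff every $m\times m$ minor of $A(R)$ vanishes at $r$, i.e. iff $\rank(A(x))\leq m-1$. Since $\rank(A(x))=k+2\leq (k+3)-1$ we obtain $r\in N_{k+3}$, while $\rank(A(x))=k+2>(k+2)-1$ gives $r\notin N_{k+2}$; hence $r\in N_{k+3}\setminus N_{k+2}$. As each $(k+3)\times(k+3)$ minor is a linear combination of $(k+2)\times(k+2)$ minors via cofactor expansion, we have $J_{k+3}\subseteq J_{k+2}$ and therefore $N_{k+2}\subseteq N_{k+3}$; thus $N_{k+3}\setminus N_{k+2}$ is open inside the closed set $N_{k+3}$, i.e. quasi-affine, as claimed.

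I do not anticipate a genuine obstacle, since the substantive input, Proposition~\ref{kp11}, is already available; the only care needed is bookkeeping. The delicate points are matching the dimension index $k$ of this statement with the index of Proposition~\ref{kp11}, confirming the squared-distance convention so that $A(R)|_{R=r}=A(x)$, and handling the boundary case $k=n-1$ (the regular simplex), which falls outside the range $k\leq n-2$ of Proposition~\ref{kp11}. In that case $A(x)$ is the full $(n+1)\times(n+1)$ Cayley--Menger matrix of a non-degenerate simplex, so it has maximal rank $n+1=k+2$; there are then no $(k+3)\times(k+3)$ minors, so the first assertion holds vacuously, $N_{k+3}=\mathbb{C}^{q}$, and $N_{k+2}=N_{n+1}$ is the vanishing locus of the Cayley--Menger determinant, which $r$ avoids by non-degeneracy. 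I would also remark that centrality of $x$ is never used beyond fixing the dimension $\delta(x)=k$.
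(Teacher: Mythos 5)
Your proof is correct and follows essentially the same route as the paper: apply Proposition~\ref{kp11} (with the index shift $\delta(x)=k=n-(n-k)$) to get $\operatorname{rank}(A(x))=k+2$, then translate rank into vanishing of all $(k+3)\times(k+3)$ minors and non-vanishing of some $(k+2)\times(k+2)$ minor, giving $r\in N_{k+3}\setminus N_{k+2}$. Your handling of the boundary case $k=n-1$ is a genuine refinement over the paper's proof, which silently invokes Proposition~\ref{kp11} outside its stated range $k\leq n-2$; your observation that a non-degenerate $(n-1)$-dimensional configuration has a nonsingular Cayley--Menger matrix (so the first claim holds vacuously and $r\notin N_{n+1}$) closes that gap.
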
 

\begin{proof} Given a central configuration $x$ of dimension $\delta(x)=k$ and $r=(r_{ij})\in\mathbb{C}^{q}$ its associated point of mutual distances, by Proposition \ref{kp11}, we have that $\text{rank}(A(x))=k+2$.
Therefore, every minor determinants of $A(R)$ with size $k+3$ evaluated in $r\in\mathbb{C}^{q}$ are zero. So $r \in N_{k+3}$. Moreover, if $r=(r_{12},...,r_{(n-1)n}) \in N_{k+2}$ then $rank(A(x))< n-2$, which is inconsistent with our claim. 
\end{proof}

Suppose that  $x$ is a central configuration of dimension $n-2$ with non-zero masses. Hence, the kernels of the matrices $A(x)$
and $X$ are isomorphic and have dimension $1$. If $(\Delta_{1},...,\Delta_{n})$ is the unique generator of $Ker(X)$, then  the unique generator of $Ker(A(x))$ is $(\Delta_0,\Delta_{1},...,\Delta_{n})$, where $\Delta_{0}=-\sum_{i=1}^{n}\|x_{i}\|^{2}\Delta_{i}$. We also know that every line of the cofactor matrix of $A(x)$ belongs to its kernel. Hence, there exist non-zero  constants $b_i$ such that $F_{ij}=b_{i}\Delta_{j}$. Since $A(x)$ is a symmetric $b_{i}\Delta_{j}=F_{ij}=F_{ji}=b_{j}\Delta_{i}$. In this way, we get that
$$\left(\begin{array}{ccc}
b_{0} &  \ldots  & b_{n} \\
\Delta_{0} & \ldots & \Delta_{n} \\
\end{array}\right)$$
has rank $1$. Since the second row of this matrix is non-trivial, there exists a unique $\alpha \neq 0$ such that $\alpha \Delta_{i}=b_{i}$. This proves the following result:

\begin{prop}\label{2z1} Suppose that $x$ is a central configuration of dimension $n-2$.
 If $(\Delta_{0},...,\Delta_{n})$ belongs to the kernel of $A(x)$, there exists an unique non zero constant $\alpha$ such that 
\begin{eqnarray} \label{2c14}
F_{ij}=\alpha\Delta_{i}\Delta_{j},~0\leq i,j \leq n.
\end{eqnarray}
Moreover, at least two $\Delta_{i}$'s are non zero real numbers.
\end{prop}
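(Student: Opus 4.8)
The plan is to exploit the classical relationship between a square matrix of corank one and its adjugate. First I would record that, since $\delta(x)=n-2$, Proposition \ref{moeck1} gives $\text{rank}(A(x))=n$; thus $A(x)$ is an $(n+1)\times(n+1)$ matrix of corank one and $\text{Ker}(A(x))$ is the one-dimensional line spanned by the given vector $\Delta=(\Delta_0,\dots,\Delta_n)$.

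Next I would invoke the adjugate identity $A(x)\,\text{adj}(A(x))=\det A(x)\cdot I=0$, the last equality holding because $\text{rank}(A(x))<n+1$. This shows that every column of $\text{adj}(A(x))$ lies in $\text{Ker}(A(x))$ and is therefore a scalar multiple of $\Delta$. Writing the $j$-th column as $b_j\Delta$ gives $\text{adj}(A(x))_{ij}=b_j\Delta_i$; since $A(x)$ is symmetric its cofactor matrix is symmetric and coincides with its adjugate, so $F_{ij}=F_{ji}=b_j\Delta_i$, that is $b_i\Delta_j=b_j\Delta_i$ for all $i,j$.

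Then I would conclude the factorization. The relations $b_i\Delta_j=b_j\Delta_i$ say precisely that the $2\times(n+1)$ matrix with rows $(b_0,\dots,b_n)$ and $(\Delta_0,\dots,\Delta_n)$ has all $2\times2$ minors equal to zero, hence rank at most one. Because $\Delta$ is a nonzero kernel generator its row is nontrivial, so there is a unique scalar $\alpha$ with $b_i=\alpha\Delta_i$, whence $F_{ij}=\alpha\Delta_i\Delta_j$. To see $\alpha\neq0$ I would use that $\text{rank}(A(x))$ is exactly $n$: some $n\times n$ minor is nonzero, so some cofactor $F_{i_0j_0}\neq0$, which forces $\alpha\neq0$; equivalently, the adjugate of a corank-one matrix has rank exactly one, not zero. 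Uniqueness of $\alpha$ is then immediate from $\Delta\neq0$.

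Finally, for the ``moreover'' clause I would transfer information from $\text{Ker}(X)$. Under the isomorphism $T$ of the preceding Corollary, $(\Delta_1,\dots,\Delta_n)$ generates $\text{Ker}(X)$, so Proposition \ref{p3} applies and guarantees that at least two of the $w_i=\Delta_i/m_i$ are nonzero; since the masses are nonzero reals and $x$ is a real configuration, at least two of the $\Delta_i$ are nonzero real numbers. The only genuine subtlety --- the main point to get right --- is the bookkeeping ensuring that $\alpha$ is really nonzero, i.e.\ that passing to corank one produces an adjugate of rank exactly one rather than the zero matrix; everything else is linear-algebraic routine once the kernel has been pinned down to a single line.
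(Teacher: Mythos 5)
Your proposal is correct and follows essentially the same route as the paper: both use the adjugate identity $A(x)\,\mathrm{adj}(A(x))=\det A(x)\cdot I=0$ to place the cofactor rows/columns in the one-dimensional kernel spanned by $\Delta$, then use symmetry of $A(x)$ and the rank-one argument on the $2\times(n+1)$ matrix of $b_i$'s and $\Delta_i$'s to produce the unique $\alpha$ with $F_{ij}=\alpha\Delta_i\Delta_j$. If anything, your treatment is slightly more careful than the paper's at the one delicate point --- you justify $\alpha\neq 0$ from $\mathrm{rank}(A(x))=n$ forcing a nonzero cofactor, whereas the paper simply asserts the $b_i$ are nonzero --- and you make explicit the appeal to Proposition \ref{p3} for the ``moreover'' clause, which the paper leaves implicit.
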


\section{The Algebraic Set of $(n-2)$-Dimensional Central Configurations}\label{s4}

In this section, we  define a family $\{V^{a}\}_{2a\in \mathbb{Z}\setminus \{0\}}$ of quasi-affine algebraic sets on $\mathbb{C}^{q+2n+1}$ such that  $V^{a}$ contains every point associated to an $(n-2)-$dimensional central configurations associated to the potential $U_{a}$, for every $a\in \frac{\mathbb{Z}}{2}\setminus \{0\}$.

Consider $x$  an $(n-2)-$dimensional central configuration with respective masses $m_1,...,m_n$. Up to an homothety on $x$ we  may consider $r_{0}=1$  in the equations $\eqref{dd2}$. This homothety fixes a representant of the class of $x$ modulo dilations.

Let $\mathfrak{X}^{a}$ be the set of all classes modulo symmetries of $(n-2)-$ dimensional central configurations with nonzero semi-integer exponent $a$  such that $r_0=1$. Toward a solution for finiteness problem, we can restrict our attention to the set $\mathfrak{X}^{a}$.

 We  use the notation $(r,z,\Delta_0,m)$ for an arbitrary point of $\mathbb{C}^{q+2n+1}$ with coordinates $(r_{12},...,r_{(n-1)n}, z_1,...,z_n, \Delta_0,m_1,...,m_n).$ 

From now to the end, $r_{ij}$ denotes a coordinate of a point in $P \in\mathbb{C}^{q+2n+1}$. When $P$ is associated to $x$, a $(n-2)-$dimensional central configuration, we write $P_x$ in place of $P$. In this case $r_{ij}$'s represent the mutual distances of $x$ and, in particular, are  positive real numbers.

 An association between points of $C^{q+2n+1}$ and elements of $\mathfrak{X}^{a}$ will be established in the theorem  \ref{newthm}. The ring of regular functions on $\mathbb{C}^{q+2n+1}$ will be denoted by $T=\mathbb{C}[R_{12},...,R_{(n-1)n},Z_1,...,Z_n,\Delta,M_1,...,M_n]$. 

Define the following algebraic set
\begin{equation*}
\widetilde{V}^{a}=\{(r,z,\Delta_0,m)\in \mathbb{C}^{q+2n+1}: g^{a}_{ij}=0,~F(R)=0,~\Gamma_l=0,~l=0,...,n\},
\end{equation*}
where  $F(R) \in T$ is the  determinant of the Cayley-Menger matrix $A(R)$, $\Gamma_0= \sum_{j=1}^n M_j Z_j$, $\Gamma_l = \Delta +  \sum_{j=1}^n M_j Z_j R_{jl}^{2}, \quad l=1,...,n$, and
$$g^{a}_{ij}=\begin{cases}
R_{ij}^{2a}-1-Z_{i}Z_{j},   & \text{ if } a>0, \\
R_{ij}^{-2a}(Z_iZ_j+1)-1,   & \text{ if } a<0,
\end{cases}$$
for which $1\leq i<j \leq n.$

\begin{theorem}\label{newthm} Let $x$ be an $(n-2)-$dimensional central configuration  with respective masses $m_1,...,m_n$ and semi-integer exponent $a\in \frac{\mathbb{Z}}{2}\setminus \{0\}$. There exists a multivalued function $f_{a}: \mathfrak{X}^{a}\rightarrow \widetilde{V}^{a}$ that associates $x$ to the points  $P_{x}=(r,z_1,...,z_n,\Delta_0,m)$ and $P^{*}_{x}=(r,-z_1,...,-z_n,-\Delta_0,m)$ $\in\widetilde{V}^{a}$, for which $r=(r_{ij})$ is the vector of mutual distances of $x$, $m=(m_i)$ is the vector of the respective mass of $x$ and  the $z_i$'s and $\Delta_{0}$ are determined by polynomial equations that define $\widetilde{V}^{a}$. Moreover,  the numbers $r_{ij}$ and $m_i$ are real and positive, the numbers $z_i$'s and $\Delta_{0}$ are all real or all pure imaginary and there exist at least two nonzero  $z_i$'s.
\end{theorem}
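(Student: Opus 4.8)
The plan is to construct the point $P_x$ explicitly from the kernel data attached to $x$ and then verify membership in $\widetilde{V}^{a}$ one defining equation at a time. Since $\delta(x)=n-2$, equation \eqref{a1} gives $\operatorname{rank}(X)=n-1$, so $\operatorname{Ker}(X)$ is one-dimensional; I fix a generator $(\Delta_1,\dots,\Delta_n)$. By Proposition \ref{p3}, putting $w_i=\Delta_i/m_i$ produces a nonzero constant $\kappa$ with $S_{ij}=\kappa w_iw_j$, where, using \eqref{dd2} together with the normalization $r_0=1$, one has $S_{ij}=r_{ij}^{2a}-1$; moreover at least two of the $w_i$ are nonzero. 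First I would note that $\kappa\in\rr$: picking indices $p\neq p'$ with $w_pw_{p'}\neq0$, the number $\kappa=S_{pp'}/(w_pw_{p'})$ is a ratio of reals. I then fix a square root $\sqrt{\kappa}\in\cc$ and set $z_i=\sqrt{\kappa}\,w_i$; the two choices of square root will yield precisely the two claimed points, so this already accounts for the multivaluedness of $f_a$.

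Next I would verify the algebraic defining equations. For the product relations, $z_iz_j=\kappa w_iw_j=S_{ij}=r_{ij}^{2a}-1$, and this is exactly $g^{a}_{ij}=0$ in both sign cases: for $a>0$ it reads $r_{ij}^{2a}-1-z_iz_j=0$, while for $a<0$ one has $z_iz_j+1=r_{ij}^{2a}$, and multiplying by $r_{ij}^{-2a}$ gives $R_{ij}^{-2a}(Z_iZ_j+1)-1=0$ at the point. For the Cayley–Menger equation, since $\delta(x)=n-2$ Proposition \ref{moeck1} gives $\operatorname{rank}(A(x))=n$, so $A(x)$ is singular and $F(r)=|A(r)|=0$.

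It remains to treat the affine constraints $\Gamma_l$ and to pin down $\Delta_0$; this is where I expect the real content to lie. The key input is Lemma \ref{rem} and its corollary: the vector $(c,\Delta_1,\dots,\Delta_n)$ with $c=-\sum_i\|x_i\|^2\Delta_i$ generates $\operatorname{Ker}(A(x))$. Reading the rows of $A(x)$ against this kernel vector gives $\sum_j\Delta_j=0$ from row $0$, and $\sum_j r_{lj}^2\Delta_j=-c$ from row $l$ for each $l=1,\dots,n$. Since $m_jz_j=\sqrt{\kappa}\,\Delta_j$, the first identity yields $\Gamma_0=\sum_j m_jz_j=\sqrt{\kappa}\sum_j\Delta_j=0$. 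Setting $\Delta_0:=\sqrt{\kappa}\,c$, the second identity gives, for every $l$, $\Gamma_l=\Delta_0+\sum_j m_jz_j r_{jl}^2=\sqrt{\kappa}\,c+\sqrt{\kappa}(-c)=0$. The subtle point is that one single scalar $\Delta_0$ satisfies all $n$ equations $\Gamma_l=0$ simultaneously, and this compatibility is exactly what the Cayley–Menger kernel structure guarantees, because $\sum_j r_{lj}^2\Delta_j=-c$ is independent of $l$.

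Finally I would record the stated reality and positivity assertions. The $r_{ij}$ and $m_i$ are real and positive because $x$ is a genuine configuration with nonzero masses and $r_{ij}=\|x_i-x_j\|$. As $\kappa\in\rr$, the scalar $\sqrt{\kappa}$ is either real (when $\kappa>0$) or purely imaginary (when $\kappa<0$); since all $w_i$ and $c$ are real, the numbers $z_i=\sqrt{\kappa}\,w_i$ and $\Delta_0=\sqrt{\kappa}\,c$ are then simultaneously all real or all purely imaginary. At least two $z_i$ are nonzero because at least two $w_i$ are nonzero and $\sqrt{\kappa}\neq0$. Replacing $\sqrt{\kappa}$ by $-\sqrt{\kappa}$ fixes $r$ and $m$ while sending $(z_i,\Delta_0)\mapsto(-z_i,-\Delta_0)$, which produces the companion point $P_x^{*}$ and exhibits $f_a$ as the desired two-valued assignment $\mathfrak{X}^{a}\to\widetilde{V}^{a}$.
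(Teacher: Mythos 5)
Your constructive half is correct and follows the paper's own route almost step for step: you invoke Proposition \ref{p3} to get $S_{ij}=\kappa w_iw_j$, rescale by $\sqrt{\kappa}$ to absorb the constant, use Lemma \ref{rem} to place $(\Delta_0,m_1z_1,\dots,m_nz_n)$ in $\operatorname{Ker}(A(x))$ so that $\Gamma_0=\Gamma_l=0$, and use Proposition \ref{moeck1} to get $F(r)=0$. In fact you are more careful than the paper in two places: you justify that $\kappa$ is real (which the paper uses tacitly when it splits into the cases $\kappa>0$ and $\kappa<0$), and you check $g^{a}_{ij}=0$ separately in both sign cases of $a$.

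However, there is a genuine gap: you never prove the part of the statement asserting that ``the $z_i$'s and $\Delta_{0}$ are determined by polynomial equations that define $\widetilde{V}^{a}$.'' Your construction shows that the two square-root choices \emph{produce} the pair $\{P_x,P_x^*\}$, but it does not show that these are the \emph{only} points of $\widetilde{V}^{a}$ of the form $(r,\cdot,\cdot,m)$ attached to $x$, nor that the pair is independent of the choices you made. Note that your construction begins with ``fix a generator $(\Delta_1,\dots,\Delta_n)$ of $\operatorname{Ker}(X)$,'' and a priori a different generator rescales $\kappa$ and the $w_i$'s, so without an argument the resulting pair of points could depend on that choice; your sentence ``this already accounts for the multivaluedness of $f_a$'' conflates what the construction outputs with what the equations determine. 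The paper closes exactly this hole: if $P_{x}=(r,z,\Delta_0,m)$ and $P^{*}_{x}=(r,z^{*},\Delta^{*}_{0},m)$ both lie in $\widetilde{V}^{a}$, then the $\Gamma$-equations say both $(\Delta_0,m_1z_1,\dots,m_nz_n)$ and $(\Delta^{*}_0,m_1z^{*}_1,\dots,m_nz^{*}_n)$ lie in $\operatorname{Ker}(A(x))$, which is one-dimensional because $\operatorname{rank}(A(x))=n$; hence $(\Delta_0,z)=\xi(\Delta^{*}_0,z^{*})$ for some $\xi\neq0$, and comparing the equations $g^{a}_{i_0j_0}(P_x)=0$ and $g^{a}_{i_0j_0}(P^{*}_x)=0$ at indices with $z_{i_0}z_{j_0}\neq0$ forces $\xi^{2}=1$. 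This short argument is what makes $f_a$ a well-defined two-valued assignment (and it is also what later parts of the paper lean on when counting points of the fibre $\pi^{-1}(m)$); you should append it to your proof.
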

\begin{proof}
 Let $X$ be the configuration matrix of $x$. Consider the quantities $\Delta_{j}$ defined in the equations $\eqref{xa2}$ for the case $k=2.$ We have that $(\Delta_1,...,\Delta_n)\in \mathbb{R}^{n}$  
belongs to the kernel of $X$. Applying the Proposition $\ref{p3}$ we have that there exists quantities $w_1,...,w_n$ such that the mutual distances $r_{ij}$ of $x$ satisfies  the following equations 
\begin{equation} \label{eqdzjr}
r_{ij}^{2a}-1-\kappa w_{i}w_{j}=0, \quad 1\leq i< j\leq n.
\end{equation}
In order to eliminate the constant $\kappa$, we choose the principal branch of the complex square root function  and make a change of variables  $w \mapsto \sqrt{\kappa}z$, obtaining the following set of equations:
\begin{equation} \label{eqdzj}
r_{ij}^{2a}-1-z_{i}z_{j}=0, \quad 1\leq i< j\leq n. 
\end{equation}
 Note that if $\kappa>0$ then the quantities $z_j$ are all real number, and if $\kappa <0$ the quantities $z_j$ are all pure imaginary complex numbers.
 
 We remark that if we choose the other branch of the square root we obtain the same correspondence between $x$ and the points of $\widetilde{V}^{a}$. 

Observe that, if $z_1,...,z_n$ are such as in $\eqref{eqdzj}$, making 
\begin{equation}\label{eqdelta}
\Delta_{0}=-\sum_{j=1}^{n}\|x_{j}\|^2m_jz_j,
\end{equation} 
we have that the vector $(\Delta_{0},m_1z_{1},...,m_nz_{n})$, belong to the kernel of $A(x)$. Writing the equations for the kernel of $A(x)$ explicitly we obtain:
\begin{equation}\label{eqdzj2}
\sum_{j=1}^n m_j z_j=0, \quad
\Delta_0 +  \sum_{j=1}^n m_j z_j r_{jl}^{2}=0, \ \ l=1,...,n.
\end{equation}

Hence $x$ can be associated to a point $P_x=(r,z,\Delta_{0},m)$ where
$r$ is their vector of mutual distances, $m=(m_1,...,m_n)$ is the mass vector of $x$,  $z_1,...,z_n$  and $\Delta_0$ are all real  or all pure imaginary numbers determined by the equations of $\text{Ker}(A(x))$.

By the equations \eqref{eqdzj} and $\eqref{eqdzj2}$ then $g^a_{ij}(P_{x})=0$, $\Gamma_{0}(P_{x})=0$ and $\Gamma_{l}(P_x)$. Since $\delta(x)=n-2$, by the Proposition $\ref{moeck1}$ we have that $F(P_{x})=0.$ Hence, $P_x \in \widetilde{V}^{a}$.
Finally, if $x$ is associated to two points $P_{x}=(r,z,\Delta_0,m)$ and $P^{*}_{x}=(r,z^{*},\Delta^{*}_{0},m)$ of $\widetilde{V}^{a}$ then, since $\text{dim}(\text{Ker}(A(x)))=1$, for some $\xi \neq 0$, holds $(\Delta,z)=\xi(\Delta^{*},z^{*}).$ By the equations $\eqref{eqdzj2}$ there exists $z_{i_0}$ and $z_{j_0}$ both nonzero. Comparing the equations $g^{a}_{i_{0}j_{0}}(P(x))=0$ and $g^{a}_{i_{0}j_{0}}(P^{*}_{x})=0$ we get $\xi^{2}=1$. This proves the result.

\end{proof}


\begin{lemma} Let $F(R)$ be the determinant of the matrix $A(R)$ and $P_x=(r,z,\Delta_{0},m)$ a point associated to $x$, a $(n-2)-$dimensional central configuration. Then
\begin{equation}\label{eqnew}
\frac{\partial F}{\partial R_{ij}}(P_x)=4\alpha r_{ij}m_{i}z_{i}m_{j}z_{j}, \quad 1 \leq i<j\leq n.
\end{equation}
for which $\alpha$ is a constant defined as in Proposition $\ref{2z1}$,
\end{lemma}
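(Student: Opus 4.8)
The plan is to differentiate the determinant $F=\det A(R)$ directly, using the classical fact that the partial derivative of a determinant with respect to one of its entries equals the corresponding cofactor, and then to invoke Proposition \ref{2z1} to rewrite the cofactors in terms of the kernel data of $A(x)$.

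First I would record how the variable $R_{ij}$ enters the matrix $A(R)$. For $1 \le i < j \le n$, the entry $s_{ij}=R_{ij}^{2}$ occupies exactly the two symmetric positions $(i,j)$ and $(j,i)$ (in the indexing from $0$ to $n$), while every other entry of $A(R)$ is independent of $R_{ij}$. Writing $F_{kl}$ for the cofactor of the entry in position $(k,l)$, the total-differential formula $dF=\sum_{k,l}F_{kl}\,da_{kl}$ together with $da_{ij}=da_{ji}=2R_{ij}\,dR_{ij}$ gives
\[
\frac{\partial F}{\partial R_{ij}}=2R_{ij}\bigl(F_{ij}+F_{ji}\bigr).
\]
Since $A(R)$ is symmetric, its cofactor matrix is symmetric as well, so $F_{ij}=F_{ji}$ and hence $\partial F/\partial R_{ij}=4R_{ij}F_{ij}$. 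This accounts for the factor $4$ in the statement: one factor of $2$ comes from the two symmetric occurrences of the entry, the other from differentiating $R_{ij}^{2}$.

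Next I would evaluate at the point $P_x$. Because $x$ is an $(n-2)$-dimensional central configuration, Theorem \ref{newthm} produces a generator $(\Delta_0,m_1 z_1,\ldots,m_n z_n)$ of $\mathrm{Ker}(A(x))$, and Proposition \ref{kp11} (with $k=2$) guarantees $\mathrm{rank}(A(x))=n$, so Proposition \ref{2z1} applies to this generator. Setting $\Delta_i:=m_i z_i$ for $i=1,\ldots,n$, Proposition \ref{2z1} yields $F_{ij}(P_x)=\alpha\,\Delta_i\Delta_j=\alpha\,m_i z_i\,m_j z_j$ for the constant $\alpha$ of that proposition. Substituting $R_{ij}(P_x)=r_{ij}$ and this identity into $\partial F/\partial R_{ij}=4R_{ij}F_{ij}$ gives $\partial F/\partial R_{ij}(P_x)=4\alpha r_{ij} m_i z_i m_j z_j$, as claimed.

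The computation itself is routine once the cofactor formula is in place; the only genuine point requiring care is the bookkeeping that produces the factor $4$ — one must not forget that the single variable $R_{ij}$ sits in two symmetric entries of $A(R)$ and that each of those entries is $R_{ij}^{2}$ rather than $R_{ij}$. The remaining subtlety is merely to ensure that the kernel generator fed into Proposition \ref{2z1} is the one supplied by Theorem \ref{newthm}, namely $(\Delta_0,m_1 z_1,\ldots,m_n z_n)$, so that the constant $\alpha$ matches exactly and no spurious scaling factor appears.
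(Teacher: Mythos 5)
Your proposal is correct and follows essentially the same route as the paper's proof: the cofactor formula for the derivative of the determinant together with the symmetry of $A(R)$ gives $\frac{\partial F}{\partial R_{ij}}=4R_{ij}F_{ij}$, and Proposition \ref{2z1} applied to the kernel generator $(\Delta_0,m_1z_1,\ldots,m_nz_n)$ from Theorem \ref{newthm} yields $F_{ij}(P_x)=\alpha\,m_iz_i\,m_jz_j$. The paper states this more tersely ("by the chain rule and the symmetry of the Cayley-Menger matrix"), while you spell out the bookkeeping behind the factor $4$ and the choice of kernel generator, which is the same argument in fuller detail.
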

\begin{proof}
 By the chain rule and the symmetry of the Cayley-Menger matrix, we have
  $$\frac{\partial F}{\partial R_{ij}}=2(R_{ij}F_{ij}+R_{ij}F_{ji})=4R_{ij}F_{ij}.$$
Since $P_x \in \widetilde{V}^{a}$, $(\Delta_{0},m_1z_1,...,m_nz_n)\in \text{Ker}(A(x))$, by Proposition $\ref{2z1}$, we get
\begin{equation*}\label{eqclas}
\frac{\partial F}{\partial R_{ij}}(P_x)=4r_{ij}F_{ij}(P_x)=4\alpha r_{ij}m_{i}z_{i}m_{j}z_{j}, \qquad 1 \leq i < j \leq n.
\end{equation*}
\end{proof}


\begin{prop}\label{crucial} Let $x$ be  a  $(n-2)$-dimensional central configuration  with semi-integer exponent  $a \in \frac{\mathbb{Z}}{2}\setminus\{0\}$ and $P_{x}\in \mathbb{C}^{q+2n+1}$ one associated point. Consider the polynomials
$$\Psi_{i}^{a}=\begin{cases}
~\quad R_{1i}^{-2a+1}\frac{\partial F}{\partial R_{1i}}Z_{1}+...+R_{ni}^{-2a+1}\frac{\partial F}{\partial R_{ni}}Z_{n},& \text{if } a<0, \\
{\textstyle\prod\limits_{(k,l)\neq (1,i)}}R_{kl}^{2a-1}\frac{\partial F}{\partial R_{1i}}Z_{1}+...+{\textstyle\prod\limits_{(k,l)\neq(1,n)}}R_{kl}^{2a-1}\frac{\partial F}{\partial R_{1n}}Z_{n},& \text{if } a>0,
\end{cases}$$
 i=1,...,n. Then, for every $a \in \frac{\mathbb{Z}}{2}\setminus\{0\}$, the equations $\Psi_{i}^{a}(P_{x})=0$  do not hold simultaneously.
\end{prop}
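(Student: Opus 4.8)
The plan is to evaluate each $\Psi_i^a$ explicitly at the point $P_x$ using the formula \eqref{eqnew} of the preceding Lemma, and then to exhibit a sign rigidity in the resulting expression that forces a contradiction with the fact, guaranteed by Theorem \ref{newthm}, that at least two of the $z_i$ are nonzero. The whole argument is by contradiction: I assume all the $\Psi_i^a(P_x)$ vanish and show this would leave at most one nonzero $z_i$.

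First I would substitute $\frac{\partial F}{\partial R_{ji}}(P_x)=4\alpha\,r_{ij}m_i z_i m_j z_j$ into the definition of $\Psi_i^a$. In the case $a<0$ the $j$-th summand $R_{ji}^{-2a+1}\frac{\partial F}{\partial R_{ji}}Z_j$ evaluates to $4\alpha\,r_{ij}^{-2a+2}\,m_i z_i\,m_j z_j^2$; in the case $a>0$ the product of $R_{kl}^{2a-1}$ factors becomes a product of positive powers of the positive real distances $r_{kl}$, and multiplying by $\frac{\partial F}{\partial R_{ji}}(P_x)$ again yields a term of the same shape. In both cases the $j$-th term equals $4\alpha\,m_i z_i\,c_{ij}\,m_j z_j^2$ with $c_{ij}>0$ a positive real, so after factoring out the common nonzero constant $4\alpha\,m_i z_i$ I obtain
\[
\Psi_i^a(P_x)=4\alpha\,m_i z_i\sum_{j\neq i}c_{ij}\,m_j z_j^2,\qquad c_{ij}>0 .
\]

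The crucial observation is the sign rigidity of the inner sum. By Theorem \ref{newthm} the $z_j$ are all real or all pure imaginary, so the numbers $z_j^2$ are all real and of one and the same sign; since each mass $m_j$ is positive and each coefficient $c_{ij}$ is positive, every summand $c_{ij}m_j z_j^2$ is real with that common sign. Consequently the inner sum vanishes if and only if every summand vanishes, that is, if and only if $z_j=0$ for all $j\neq i$. Here I would invoke that $\alpha\neq 0$ (Proposition \ref{2z1}) and $m_i>0$ to guarantee the prefactor $4\alpha\,m_i z_i$ is nonzero whenever $z_i\neq 0$.

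Now, assuming $\Psi_i^a(P_x)=0$ for every $i$, I pick an index $i_0$ with $z_{i_0}\neq 0$, which exists since at least two of the $z_i$ are nonzero. Then $4\alpha\,m_{i_0}z_{i_0}\neq 0$, so the inner sum for $i=i_0$ must vanish, forcing $z_j=0$ for all $j\neq i_0$; this leaves $z_{i_0}$ as the only possibly nonzero coordinate, contradicting that at least two of the $z_i$ are nonzero. Hence the equations $\Psi_i^a(P_x)=0$ cannot hold simultaneously. The main obstacle, and the only place requiring genuine care, is the sign argument: one must verify that the exponent bookkeeping in both the $a<0$ and $a>0$ cases really produces strictly positive coefficients $c_{ij}$, and then combine this with the common sign of the $z_j^2$ (from the real/pure-imaginary dichotomy of Theorem \ref{newthm}) to rule out any cancellation in the sum.
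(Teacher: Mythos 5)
Your proof is correct and takes essentially the same approach as the paper: substitute $\frac{\partial F}{\partial R_{ij}}(P_x)=4\alpha r_{ij}m_iz_im_jz_j$ from the preceding Lemma, factor out $4\alpha m_iz_i$, and conclude via the sign rigidity of the inner sum (all $z_j^2$ of one sign, positive masses and distances, $\alpha\neq 0$) combined with the existence of at least two nonzero $z_j$. The paper's version is merely terser, writing out only the case $a<0$ and leaving the no-cancellation argument implicit, whereas you spell it out and treat both signs of $a$ uniformly.
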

\begin{proof}
 We prove this result only in the case $a<0$, the other case is similar.  Using equations $\eqref{eqnew}$ we obtain, for each $i=1,\cdots,n$,
\begin{align*}
\Psi_i^{a}(P_{x})=&r_{1i}^{-2a+1}\frac{\partial F}{\partial R_{1i}}(P_x)z_{1}+ ...+ r_{ni}^{-2a+1}\frac{\partial F}{\partial r_{ni}}(P_x)z_{n} \\
=&4\alpha m_{i}z_i(m_{1}r_{12}^{-2a+2}z^{2}_1+...+m_{n}r_{ni}^{-2a+2}z^{2}_n).\\
\end{align*}
The quantities $z_i$ are all real or all pure imaginary and at least two of them are non-zero. Besides,  the masses and mutual distances are strictly  positive and the constant  $\alpha$ is non-zero. Hence, the equations $\Psi_{i}^{a}(P_{x})=0$ cannot  vanish simultaneously.
\end{proof}


 \begin{prop}\label{pnew}
 For every $a\in \frac{\mathbb{Z}}{2}\setminus\{0\}$, the quasi-affine algebraic set
 $$V^{a}=\widetilde{V}^{a}\setminus (W_1\cup W_2 \cup W^{a}_{3})$$
 contains every points $P_x$ associated to $(n-2)-$dimensional central configurations, where
\begin{align*}
&W_1=\{(r,z,\Delta_0,m) \in \mathbb{C}^{q+2n+1}:{\textstyle\prod\limits_{i<j}}  R_{ij}=0\},\\
&W_2=\{(r,z,\Delta_0,m) \in \mathbb{C}^{q+2n+1}:F_{ij}(R_{ij}^{2})=0\},\\
&W^{a}_{3}=\{(r,z,\Delta_0,m) \in \mathbb{C}^{q+2n+1}:\Psi^{a}_{i}=0, \quad i=1,...,n\}.
\end{align*}

\end{prop}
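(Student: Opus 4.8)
The plan is to prove membership $P_x \in V^a$ by establishing two things: that the associated point lies in $\widetilde{V}^{a}$, and that it avoids each of the three excised loci $W_1$, $W_2$, and $W^{a}_{3}$. Since $V^{a}=\widetilde{V}^{a}\setminus(W_1\cup W_2\cup W^{a}_{3})$, this is precisely what the statement asks. The inclusion $P_x\in\widetilde{V}^{a}$ is already supplied by Theorem \ref{newthm}, so the real work reduces to three non-membership verifications, each of which I expect to follow directly from an earlier result together with the positivity of the underlying geometric data.

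First I would dispose of $W_1$ and $W_2$, which are the elementary cases. For $W_1$, recall that by construction the coordinates $r_{ij}$ of $P_x$ are the genuine mutual distances $\|x_i-x_j\|$ of the configuration $x$, and these are strictly positive real numbers by the standing assumption made at the start of the paper. Hence $\prod_{i<j}r_{ij}>0$ and $P_x\notin W_1$. For $W_2$, I would invoke that $\delta(x)=n-2$ forces $\operatorname{rank}(A(x))=n$ by Proposition \ref{moeck1}. Since $A(x)$ is an $(n+1)\times(n+1)$ matrix of rank $n$, at least one of its $n\times n$ minors is nonzero, i.e.\ some cofactor $F_{ij}(P_x)\neq 0$; indeed, Proposition \ref{2z1} makes this sharper by writing $F_{ij}=\alpha\Delta_i\Delta_j$ with $\alpha\neq 0$ and at least two of the $\Delta_i$ nonzero. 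Therefore $P_x\notin W_2$.

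The remaining exclusion $P_x\notin W^{a}_{3}$ is exactly the content of Proposition \ref{crucial}, which asserts that for every $a\in\frac{\mathbb{Z}}{2}\setminus\{0\}$ the equations $\Psi^{a}_{i}(P_x)=0$ cannot hold simultaneously. Combining this with the two previous non-memberships and the inclusion $P_x\in\widetilde{V}^{a}$ yields $P_x\in V^{a}$, completing the argument. I do not anticipate a serious obstacle here: the proposition is essentially a packaging of the preceding lemmas, and the one substantive ingredient, Proposition \ref{crucial}, already carries the weight of the sign/reality dichotomy for the $z_i$ together with the positivity of masses and distances. The only care required is bookkeeping — confirming that the defining vanishing conditions of $W_1$, $W_2$, and $W^{a}_{3}$ are precisely negated by the non-vanishing statements furnished by Theorem \ref{newthm} and Propositions \ref{moeck1}, \ref{2z1}, and \ref{crucial}.
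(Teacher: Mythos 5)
Your proof is correct and follows essentially the same route as the paper: membership in $\widetilde{V}^{a}$ via Theorem \ref{newthm}, exclusion from $W_1$ by positivity of the mutual distances, exclusion from $W_2$ by the rank-$n$ property of the Cayley--Menger matrix, and exclusion from $W^{a}_{3}$ by Proposition \ref{crucial}. The only cosmetic difference is that for $W_2$ the paper cites Proposition \ref{newequation} with $k=n-2$, while you argue directly from Propositions \ref{moeck1} and \ref{2z1} --- but these amount to the same fact, since Proposition \ref{newequation} in that case is itself just the statement that $\operatorname{rank}(A(x))=n$ forces some $n\times n$ minor (cofactor) to be nonzero.
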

\begin{proof}
Consider $P_x \in f_{a}(\mathfrak{X}^{a})$, where $f_{a}$ as in Theorem $\ref{newthm}$. By construction of $f_{a}$, $P_{x}\in \widetilde{V}^{a}$. Note that $P_x$ not belongs to $W_1$ because the mutual distances $r_{ij}$ associated to a $(n-2)-$dimensional central configuration are non-zero. By Proposition  $\ref{newequation}$, applied in the case $k=n-2$,  $ P_x \not \in W_2$. Finally, as consequence of the Proposition $\ref{crucial}$, $P_x \not \in W^{a}_3$. This proves the result.
\end{proof}

Note that $V^{a}$ contains points of $\mathbb{C}^{q+2n+1}$  which are not associated to central configurations. We also denote the points of $V^{a}$ by $P$.

\section{The Jacobian Criterion}
 In this section we present the fundamental tool we use in order to calculate the dimension of $V^a$. The references for this section are \cite{shafarevich1994basic} and \cite{smith2000invitation}. 
 
 Let $W$ be an  affine algebraic set  with ideal $I(W)=\langle G_1,...,G_l\rangle$, where $G_1,...,G_l \in \mathbb{K}[Y_1,...,Y_n]$ and $\mathbb{K}$ is an algebraically closed field of zero characteristic. Let $Q$ be a point of $W$. The \emph{tangent space} of $W$ at $Q$ is the linear variety 
$$\Theta_{Q}W=Z(dG_{1}|_{Q}(Y-Q),...,dG_{l}|_{Q}(Y-Q))\subset \mathbb{A}_{\mathbb{K}}^{n},$$
where $\mathbb{A}_{\mathbb{K}}^{n}$ denotes de $n-$dimensional affine space over $\mathbb{K}$ and $Y=(Y_1,...,Y_n)$.
It can be proved that the tangent space is independent of the choice of generators for the ideal $I(W)$.

\begin{definition}
The \emph{dimension} of an  algebraic set $W$ at a point $Q \in W$, denoted by  $\text{dim}_{Q}W$, is the maximum of the dimensions of the irreducible components of $W$ that contain $Q$. A point $Q$ is said to be \emph{nonsingular} if $\text{dim}_{Q}(W)=\text{dim}(\Theta_{Q}W).$
\end{definition}

\begin{theorem}[Jacobian criterion]\label{cj}
Let $W$ be an affine algebraic set with ideal $I(W)=(G_{1},...,G_{l})$. Then $Q$ is a nonsingular  point of $W$ if and only if the rank of the Jacobian matrix $J(G_{1},...,G_l)(Q)$ of the ideal $I(W)$ at the point $Q$ is equal to $n-\text{dim}_{Q}(W)$, where, as usual,
$$J(G_{1},...,G_{l})(Q)= \left(
                    \frac{\partial{G_k}}{\partial Y_j}(Q)\right)_{1\leq k\leq l, \  
                    1\leq j \leq n}.$$
\end{theorem}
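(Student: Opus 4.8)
The plan is to reduce the statement to a single linear-algebra identity, namely that the tangent space satisfies
\[
\dim(\Theta_{Q}W) = n - \rank\big(J(G_{1},\ldots,G_{l})(Q)\big),
\]
after which the criterion follows at once from the definition of nonsingularity. So the whole theorem amounts to computing the dimension of $\Theta_{Q}W$ as an affine-linear variety and then chaining equivalences.

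First I would unwind the definition of $\Theta_{Q}W$. Each defining equation $dG_{i}|_{Q}(Y-Q)$ is the affine-linear polynomial $\sum_{j=1}^{n}\frac{\partial G_{i}}{\partial Y_{j}}(Q)\,(Y_{j}-Q_{j})$, whose linear part has coefficient vector equal to the $i$-th row of the Jacobian $J(G_{1},\ldots,G_{l})(Q)$. Thus $\Theta_{Q}W$ is cut out in $\mathbb{A}_{\mathbb{K}}^{n}$ by $l$ affine-linear equations whose coefficient matrix is exactly $J(G_{1},\ldots,G_{l})(Q)$. Because $Q$ itself solves every equation (each form $dG_{i}|_{Q}$ vanishes at $Y=Q$), the locus is nonempty, hence an affine-linear subspace: a translate of the kernel of the linear map represented by the Jacobian. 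By the rank--nullity theorem its dimension is $n$ minus the rank of that matrix, which is precisely the displayed identity.

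With the identity in hand I would simply combine it with the definition: by definition $Q$ is nonsingular if and only if $\dim_{Q}(W)=\dim(\Theta_{Q}W)$, and substituting the identity this reads $\dim_{Q}(W)=n-\rank(J(G_{1},\ldots,G_{l})(Q))$, which rearranges to $\rank(J(G_{1},\ldots,G_{l})(Q))=n-\dim_{Q}(W)$. This is exactly the asserted criterion.

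The only delicate point is that the tangent space, and therefore the rank of the Jacobian at $Q$, must not depend on the chosen generators $G_{1},\ldots,G_{l}$ of $I(W)$; this independence has already been recorded in the excerpt immediately after the definition of $\Theta_{Q}W$, so it can be invoked directly. Beyond that there is no genuine obstacle: the heart of the argument is the elementary fact that the zero locus of a consistent affine-linear system has codimension equal to the rank of its coefficient matrix, and the remainder is bookkeeping with the definition of nonsingularity.
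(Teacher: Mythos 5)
Your proof is correct, but note that the paper itself offers no proof of this theorem: it is quoted as a standard result, with the section's references (Shafarevich's \emph{Basic Algebraic Geometry} and Smith et al.'s \emph{An Invitation to Algebraic Geometry}) serving as the source. Measured against the paper's own definitions, your argument is exactly the right one and shows that in this formulation the criterion is nearly tautological: $\Theta_{Q}W$ is by definition the solution set of a consistent affine-linear system whose coefficient matrix is $J(G_{1},\ldots,G_{l})(Q)$, i.e.\ the translate $Q+\ker\big(J(G_{1},\ldots,G_{l})(Q)\big)$, so rank--nullity gives $\dim(\Theta_{Q}W)=n-\rank\big(J(G_{1},\ldots,G_{l})(Q)\big)$, and substituting this into the definition of nonsingularity ($\dim_{Q}(W)=\dim(\Theta_{Q}W)$) yields the stated equivalence. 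What your write-up makes visible is where the real content lives: everything except rank--nullity has been absorbed into the paper's definitions, and the only nontrivial input is the generator-independence of $\Theta_{Q}W$, which you correctly invoke from the remark following the definition (the paper also states it without proof). In the cited references the Jacobian criterion is a genuine theorem because nonsingularity and local dimension are defined intrinsically (via the local ring at $Q$ and $\mathfrak{m}/\mathfrak{m}^{2}$, or via transcendence degree), and connecting those notions to the rank of a Jacobian requires real work; in the paper's setup that work is definitional, which is why your short argument legitimately closes the proof.
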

 
 It can be proved that $W$ is a quasi-projective algebraic set then 
 $$\text{dim}(\Theta_Q{W})\geq \text{dim}_{Q}(W),$$ 
 $\forall Q$ $\in W$ and it is easy to see that
  $\text{dim}W=\text{max}_{Q\in W}\text{dim}_{Q}W$.  This gives an upper bound for 
  $\text{dim(W)}$ in terms of the rank of the Jacobian matrix.

\section{The Jacobian Criterion and Generic Finiteness}

In \cite{moeckel2001generic}, Moeckel defined an algebraic set very similar to $V^{a}$ as in Proposition $\ref{pnew}$, for $a=-3/2$, and computed its dimension using the fibre dimension Theorem and resultants. In this section, by using Jacobian criterion, we prove that $\text{dim}V^{a}\leq n$ for each $a \in \frac{\mathbb{Z}}{2}\setminus\{0\}.$  This permits us to extend the generic finiteness result obtained in \cite{moeckel2001generic} for potentials with semi-integers exponents.  We work only in the case $a<0$, because the case  $a>0$ is similar. Remember that the case $a=0$ is trivial. For a point $P=(r,z,\Delta_0,m) \in V^{a}$, the Jacobian matrix $J(g^{a}_{ij}, F, \Gamma_{l} )(P)$  is the $(q+n+2)\times (q+2n+1)$ block matrix given by 
 $$\left(\begin{array}{l|l|l}
 \left[\frac{\partial g^{a}_{ij}}{\partial R}\scriptstyle(P)\right]_{q \times q \quad}& \left[\frac{\partial g^{a}_{ij}}{\partial Z }\scriptstyle(P)\right]_{q \times n ~\quad }&\left[\frac{\partial g^{a}_{ij}}{\partial (\Delta,M)}\scriptstyle(P)\right]_{q \times (n+1) ~ \quad}\\ \hhline{-|-|-}
 \left[\frac{\partial F}{\partial R}\scriptstyle(P)\right]_{1 \times q ~\quad } &\left[\frac{\partial F}{\partial Z }\scriptstyle(P)\right]_{1 \times n \qquad}& \left[\frac{\partial F}{\partial (\Delta,M)}\scriptstyle(P)\right]_{1 \times (n+1)}\\   \hhline{-|-|-}
\left[\frac{\partial \Gamma_{l}}{\partial R}\scriptstyle(P)\right]_{(n+1)\times q}& \left[\frac{\partial \Gamma_{l}}{\partial Z }\scriptstyle(P)\right]_{(n+1)\times n } &\left[\frac{\partial \Gamma_{l}}{\partial (\Delta,M)}\scriptstyle(P)\right]_{(n+1) \times (n+1)}
 \end{array}\right),$$
 where  $F(R)$ is the  determinant of the Cayley-Menger matrix $A(R)$, $\Gamma_0= \sum_{j=1}^n M_j Z_j$, $\Gamma_l = \Delta +  \sum_{j=1}^n M_j Z_j R_{jl}^{2}, \quad l=1,...,n$, and
$$g^{a}_{ij}=\begin{cases}
R_{ij}^{2a}-1-Z_{i}Z_{j},   & \text{ if } a>0, \\
R_{ij}^{-2a}(Z_iZ_j+1)-1,   & \text{ if } a<0,
\end{cases}$$
for which $1\leq i<j \leq n.$ 

 Now, we estimate the rank of $J(g^{a}_{ij}, F, \Gamma_{l} )(P)$  calculating the rank of its blocks.
 \begin{lemma} \label{jm1} Consider
\renewcommand{\arraystretch}{1.5}
 $$J(g^{a}_{ij},F)(P)=\left(\begin{array}{l|l}
 \left[\frac{\partial g^{a}_{ij}}{\partial R}\scriptstyle(P)\right]& \left[\frac{\partial g^{a}_{ij}}{\partial Z }\scriptstyle(P)\right]  \vspace{0.05cm}\\ \hhline{-|-}
 \left[\frac{\partial F}{\partial R}\scriptstyle(P)\right] &\left[\frac{\partial F}{\partial Z }\scriptstyle(P)\right]\\  
  \end{array}\right).$$
If $a<0$, for every point $P \in V^{a}$,  $J(g^{a}_{ij},F)(P)$ has a non-singular square submatrix $H(P)$ of order $q+1$.  In particular,  $J(g^{a}_{ij},F)(P)$ has maximal rank.
\end{lemma}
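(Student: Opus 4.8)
The plan is to build the non-singular submatrix $H(P)$ by keeping all $q+1$ rows of $J(g^{a}_{ij},F)(P)$, all $q$ columns indexed by the $R$-variables, and a single column indexed by a variable $Z_k$ chosen at the very end. First I would examine the $q\times q$ block $\bigl[\partial g^{a}_{ij}/\partial R\bigr](P)$. For $a<0$ the polynomial $g^{a}_{ij}=R_{ij}^{-2a}(Z_iZ_j+1)-1$ involves only the variable $R_{ij}$ among all the $R_{kl}$, so this block is diagonal, with $(ij,ij)$-entry $-2a\,R_{ij}^{-2a-1}(Z_iZ_j+1)$. Evaluating at $P\in\widetilde V^{a}$ and using $g^{a}_{ij}(P)=0$, i.e. $z_iz_j+1=r_{ij}^{2a}$, this entry collapses to $-2a/r_{ij}$, which is non-zero because $a\neq0$ and, since $P\notin W_1$, $r_{ij}\neq0$. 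Hence the diagonal block $D:=\bigl[\partial g^{a}_{ij}/\partial R\bigr](P)$ is invertible, with $\det D=(-2a)^{q}/\prod_{i<j}r_{ij}\neq0$.

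Next I would adjoin the $F$-row $b^{\top}:=\bigl[\partial F/\partial R\bigr](P)$ and the $Z_k$-column, forming the $(q+1)\times(q+1)$ candidate $H(P)=\left(\begin{smallmatrix} D & c\\ b^{\top} & e\end{smallmatrix}\right)$, where $c:=\bigl[\partial g^{a}_{ij}/\partial Z_k\bigr](P)$ and $e=\partial F/\partial Z_k(P)$. Because $F=F(R)$ is the Cayley-Menger determinant and does not involve the $Z$-variables, $e=0$; the Schur-complement identity then gives $\det H(P)=\det D\cdot(-\,b^{\top}D^{-1}c)$, reducing the whole problem to one scalar. Into this scalar I would feed $\partial F/\partial R_{ij}(P)=4\alpha\,r_{ij}m_iz_im_jz_j$ from equation \eqref{eqnew} (whose computation is valid at every $P\in V^{a}$, since $P\notin W_2$ together with $F(P)=0$ forces $A(R)|_P$ to have corank one, so Proposition \ref{2z1} applies), the partials $\partial g^{a}_{ij}/\partial Z_k(P)=r_{ij}^{-2a}z_l$ (non-zero only when $k\in\{i,j\}$, with $l$ the partner index), and the diagonal of $D^{-1}$.

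The key step is to recognize the resulting scalar. Summing $b^{\top}D^{-1}c$ only over the pairs containing $k$ yields $-b^{\top}D^{-1}c=\dfrac{2\alpha m_kz_k}{a}\sum_{l\neq k}m_l\,r_{kl}^{-2a+2}z_l^{2}=\dfrac{\Psi^{a}_k(P)}{2a}$, the last equality being exactly the evaluation of $\Psi^{a}_k$ carried out in the proof of Proposition \ref{crucial}. Thus $\det H(P)=\det D\cdot\Psi^{a}_k(P)/(2a)$ for each admissible $k$. Since $P\in V^{a}$ lies outside $W^{a}_3$, the values $\Psi^{a}_1(P),\dots,\Psi^{a}_n(P)$ are not all zero; picking $k$ with $\Psi^{a}_k(P)\neq0$ makes $\det H(P)\neq0$, producing the desired non-singular submatrix of order $q+1$ and showing $J(g^{a}_{ij},F)(P)$ has maximal rank $q+1$. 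The only real obstacle is the bookkeeping in the Schur complement so that it matches the prepackaged quantity $\Psi^{a}_k$ exactly; once that identification is secured, the exclusion of $W^{a}_3$ finishes the argument.
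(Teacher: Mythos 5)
Your submatrix, and the determinant you compute for it, are exactly the paper's: the paper keeps the same $q+1$ rows, the $q$ columns of the $R$-variables and a single $Z$-column (it relabels indices so that $\Psi^{a}_{1}(P)\neq 0$ and takes the $Z_{1}$-column), and evaluates $\det H(P)$ by column operations, which is your Schur-complement computation in different clothing; both yield $\det H(P)=-(-2a)^{q-1}\Psi^{a}_{k}(P)\prod r_{ij}^{-1}$. The genuine problem is the middle of your argument: you substitute equation \eqref{eqnew}, $\partial F/\partial R_{ij}(P)=4\alpha r_{ij}m_{i}z_{i}m_{j}z_{j}$, claiming it is valid at every $P\in V^{a}$. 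It is not, and your corank-one justification does not repair it. Equation \eqref{eqnew} and the evaluation of $\Psi^{a}_{k}$ in Proposition \ref{crucial} (which you invoke for your last equality) are proved only for points $P_{x}$ associated to $(n-2)$-dimensional central configurations, and the paper explicitly warns that $V^{a}$ contains points not of this form. Corank one of $A(r)$ does give a factorization $F_{ij}=\alpha v_{i}v_{j}$ for a spanning vector $v$ of the kernel, but to get \eqref{eqnew} you must take $v=(\Delta_{0},m_{1}z_{1},\dots,m_{n}z_{n})$, and this particular vector, although it lies in the kernel by the equations $\Gamma_{l}=0$, can be the zero vector on $V^{a}$: if $P_{x}=(r,z,\Delta_{0},m)$ comes from a central configuration, then $P'=(r,z,0,0)$ still satisfies every defining equation of $\widetilde{V}^{a}$ and avoids $W_{1}$, $W_{2}$ and $W^{a}_{3}$, since none of these involve the mass coordinates; hence $P'\in V^{a}$. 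At $P'$ the right-hand side of \eqref{eqnew} vanishes while the left-hand side does not, and your intermediate expression $\frac{2\alpha m_{k}z_{k}}{a}\sum_{l\neq k}m_{l}r_{kl}^{-2a+2}z_{l}^{2}$ equals $0$ even though $\det H(P')\neq 0$. So, as written, your proof only covers points of $V^{a}$ coming from central configurations, which is not what the Lemma asserts and not what the dimension count of Section 6 requires.

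The repair is one line, and it turns your proof into the paper's: you never needed \eqref{eqnew}. Since $D^{-1}$ is diagonal with entries $r_{ij}/(-2a)$ and $\partial g^{a}_{ij}/\partial Z_{k}(P)=r_{ij}^{-2a}z_{l}$ (nonzero only when $k\in\{i,j\}$, with $l$ the partner index), the Schur complement is, term by term, $-b^{\top}D^{-1}c=\frac{1}{2a}\sum_{l\neq k}r_{kl}^{-2a+1}\frac{\partial F}{\partial R_{kl}}(P)z_{l}=\Psi^{a}_{k}(P)/(2a)$, by the very definition of $\Psi^{a}_{k}$; no evaluation of the partial derivatives of $F$ is needed. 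With that substitution the argument is valid at every $P\in V^{a}$, and the conclusion follows from $P\notin W^{a}_{3}$ (some $\Psi^{a}_{k}(P)\neq 0$) together with $P\notin W_{1}$ (so $\det D\neq 0$).
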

\begin{proof} Let $\delta_{ij}$ be the Kronecker's delta. Using the fact that $P \in V^{a}$ 
  we get $r_{ij}\frac{\partial g^{a}_{ij}}{\partial R_{kl}}(P)=\delta_{ik}\delta_{jl}(-2a)$, for which $1\leq i < j \leq n$ and $1\leq k<l \leq n$. Hence, $\left[\frac{\partial g^{a}_{ij}}{\partial R}(P)\right]$ is a diagonal matrix of entries $\frac{-2a}{r_{ij}}$. Moreover, $ \left[\frac{\partial g^{a}_{ij}}{\partial Z }(P)\right]$ is a matrix whose entries is given by $\frac{\partial g^{a}_{ij}}{\partial Z_{l}}=r_{ij}^{2a}(\delta_{il}z_{j}+\delta_{jl}z_{i})$,  $\left[\frac{\partial F}{\partial R}(P)\right]$ is the gradient of $F$ and $\left[\frac{\partial F}{\partial Z }(P)\right]$ is a null matrix.

By definition of $V^{a}$,  applying a linear isomorphism, if necessary, we have that every point $P\in V^{a}$ does not satisfy the equation:
  $$\Psi^{a}_1(P)=r_{12}^{-2a+1}\frac{\partial F}{\partial R_{12}}(P)z_{2}+...+r_{1n}^{-2a+1}\frac{\partial F}{\partial R_{1n}}(P)z_{n}=0.$$
  
By selecting the first $q+1$ rows and columns of  $J(g^{a}_{ij},F)(P)$, we get the  following  submatrix $(q+1) \times (q+1)$ :
 $$H(P)=\left(\begin{array}{cccccccccc}
\frac{-2a}{r_{12}}  & \ldots  & 0& \ldots &0&r_{12}^{-2a}z_{2} \\

\vdots   &\ddots & \vdots & \ldots & \vdots& \vdots            \\
0 & \ldots & \frac{-2a}{r_{1n}} &\ldots&0& r_{1n}^{-2a}z_{n}\\
0 & \ldots & 0                  & \ldots    &0&0\\  
\vdots&\ddots&\vdots&\ldots&\vdots&\vdots\\
0&\ldots&0&\ldots&\frac{-2a}{r_{(n-1)n}}&0\\
\frac{\partial F}{\partial R_{12}}\scriptstyle(P)&\ldots&\frac{\partial F }
{\partial R_{1n}}\scriptstyle(P)&\ldots& \frac{\partial F}{\partial R_{(n-1)n}}\scriptstyle(P)&0
\end{array}\right).$$

Applying the column operations:
   $$C_{q+1}\rightarrow \frac{1}{2a}r_{1(i+1)}^{-2a+1}z_{i+1}C_{i}+C_{q+1}, \quad i=1,...,n-1,$$
on $H(P)$, we  obtain
$$\tilde{H}(P)=\left(
\begin{array}{cccccccccccc}
\frac{-2a}{r_{12}}                 & \ldots  & 0& \ldots &0& 0             \\
\vdots                            &\ddots & \vdots&\ddots& \vdots&\vdots           \\
0                                 & \ldots & \frac{-2a}{r_{1n}}&\ldots&0&0\\
\vdots                            &\ddots&\vdots&\ddots&\vdots&\vdots\\
0                                 &\ldots&0&\ldots&\frac{-2a}{r_{(n-1)n}}&0\\
\frac{\partial F}{\partial R_{12}}&\ldots&\frac{\partial F }{\partial R_{1n}}& \ldots& \frac{\partial F}{\partial R_{(n-1)n}}&\gamma
\end{array}
\right),$$
where
$$\gamma=\frac{1}{2a}\left(r_{12}^{-2a+1}\frac{\partial F}{\partial R_{12}}(P)z_{2}+ ...+r_{1n}^{-2a+1}\frac{\partial F}{\partial R_{1n}}(P)z_{n}\right)=\frac{\Psi^{a}_1(P)}{2a}.$$
 The determinant of the matrix above is given by the product of the elements on the diagonal. Therefore it is given by $-(-2a)^{q-1}\Psi^{a}_{1}(P){\textstyle\prod} r_{ij}^{-1}\neq 0$, since $P \not \in W_{1}$. Hence $J(g^{a}_{ij},F)(P)$ has maximal rank.
 \end{proof}


Let $V^{a}=V_{1}\cup ... \cup V_{b}$ the decomposition of $V^{a}$ in irreducible components and consider the algebraic sets $D_{k}=\{(r,z,\Delta_{0},m) \in V^{a}: Z_k=0\},$ $k=1,...,n.$ We have two cases for consider: $V_i \not \subset D_k$, $\forall k$ or  $V_i \subset D_k$ for some $k$.

To compute de dimension of $V^{a}$ we need to calculate the rank of
\begin{equation}\label{jacayley}\left[\frac{\partial \Gamma_{l}}{\partial (\Delta,M)}(P)\right]=\left(
\begin{array}{cccccc} 
0 &z_1 &z_2&z_3& \ldots  & z_n \\
1 & 0&z_1r_{12}^2& z_2r^{2}_{13}&\ldots & z_{n}r^{2}_{1n} \\
1 & z_1r^{2}_{12}&0& z_2r^{2}_{23}&\ldots & z_nr^{2}_{2n}\\
\vdots &\vdots &\vdots& \vdots & & \vdots\\
1 & z_{1}r^{2}_{1n}&z_{2}r^{2}_{2n}& z_{3}r^{2}_{3n}&\ldots & 0\\
\end{array}
\right),
\end{equation}
in the two cases above.
\begin{lemma}\label{jm2}
Let $V_i$ be a irreducible component of $V^{a}$. If $V_{i} \not \subset D_{k}$, $\forall k$, there exists an open subset $U \in V_i$ such that for each $P\in U$,   $\left[\frac{\partial \Gamma_{l}}{\partial (\Delta,M)}(P)\right]$ has rank $n$.   In particular,  for each $P\in U$, $\left[\frac{\partial \Gamma_{l}}{\partial (\Delta,M)}(P)\right]$
has a non-singular square submatrix $G$ of order $n$.

\end{lemma}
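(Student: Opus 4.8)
The plan is to recognize the displayed block as a product of the Cayley--Menger matrix with a diagonal matrix, and thereby reduce its rank to that of the Cayley--Menger matrix, which is already pinned down on $V^{a}$. Writing $A$ for the matrix $A(R)$ evaluated at $P$ (so its off-diagonal entries are the $r_{ij}^{2}$) and differentiating $\Gamma_0=\sum_j M_jZ_j$ and $\Gamma_l=\Delta+\sum_j M_jZ_jR_{jl}^{2}$, I read off $\partial\Gamma_0/\partial\Delta=0$, $\partial\Gamma_0/\partial M_j=Z_j$, and for $l\geq1$, $\partial\Gamma_l/\partial\Delta=1$ and $\partial\Gamma_l/\partial M_j=Z_jR_{jl}^{2}$, the term $j=l$ vanishing since $R_{ll}=0$. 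Comparing column by column with $A$ yields the identity
\begin{equation*}
\left[\frac{\partial \Gamma_{l}}{\partial (\Delta,M)}(P)\right]=A\,D,\qquad D=\operatorname{diag}(1,z_1,\dots,z_n),
\end{equation*}
since scaling the $j$-th column of $A$ by $z_j$ (and leaving the $\Delta$-column unscaled) reproduces exactly the entries coming from the $\Gamma_l$. Hence the rank of the block equals $\rank(AD)$.

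Next I would produce the open set $U$ using irreducibility. By hypothesis $V_i\not\subset D_k$ for every $k=1,\dots,n$, so each coordinate $Z_k$ is not identically zero on the irreducible set $V_i$; thus $\{P\in V_i:z_k(P)\neq0\}$ is a non-empty (dense) open subset of $V_i$, and intersecting these finitely many sets gives a non-empty open subset
\begin{equation*}
U=\{P\in V_i:z_k(P)\neq0,\ k=1,\dots,n\}.
\end{equation*}
On $U$ the matrix $D=\operatorname{diag}(1,z_1,\dots,z_n)$ is invertible, so right multiplication by $D$ preserves rank and $\rank(AD)=\rank(A)$ at every $P\in U$.

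It remains to check that $\rank(A)=n$ on $U$, and here I would invoke the very cuts defining $V^{a}$. As $U\subset V_i\subset V^{a}\subset\widetilde{V}^{a}$, the defining equation $F(R)=0$ forces $\det A=0$, so $\rank(A)\leq n$. On the other hand $V^{a}$ was obtained by deleting $W_2$, the locus where all cofactors $F_{ij}$ of $A(R)$ vanish; since $P\notin W_2$, some $n\times n$ minor of $A$ is non-zero, giving $\rank(A)\geq n$. Thus $\rank(A)=n$, and therefore the block $\left[\partial\Gamma_l/\partial(\Delta,M)(P)\right]$ has rank $n$ on $U$ and, in particular, a non-singular $n\times n$ submatrix $G$. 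The step I expect to be the crux is the factorization $\left[\partial\Gamma_l/\partial(\Delta,M)(P)\right]=AD$ paired with the correct reading of $W_2$ as the rank-$(\leq n-1)$ locus of $A(R)$: this is precisely what upgrades the bound $\rank(A)\leq n$ coming from $F=0$ to the equality $\rank(A)=n$, after which the invertible-scaling argument is routine.
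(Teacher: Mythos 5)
Your proof is correct and follows essentially the same route as the paper: the paper also restricts to the open subset of $V_i$ where all $z_k\neq 0$ (obtained, as you do, from irreducibility and the hypothesis $V_i\not\subset D_k$ for all $k$), observes that on this set the block $\left[\frac{\partial \Gamma_{l}}{\partial (\Delta,M)}(P)\right]$ is column-equivalent to $A(r)$ --- your factorization $A\,D$ with $D=\operatorname{diag}(1,z_1,\dots,z_n)$ invertible is exactly this observation, made explicit --- and concludes that the rank is $n$. The one step where you diverge is the justification that $\operatorname{rank}A(r)=n$: the paper cites Proposition~\ref{moeck1}, which is a statement about Cayley--Menger matrices of actual configurations, whereas you derive the equality directly from the cuts defining $V^{a}$ ($F(R)=0$ forces $\operatorname{rank}A(r)\leq n$, and $P\notin W_2$ supplies a nonvanishing cofactor, forcing $\operatorname{rank}A(r)\geq n$). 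Your version is in fact the more watertight one: as the paper itself remarks after Proposition~\ref{pnew}, $V^{a}$ contains points that are not associated to any central configuration, so Proposition~\ref{moeck1} does not literally apply to an arbitrary $P\in U$; the argument via $F=0$ together with the removal of $W_2$ is precisely what closes that gap.
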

\begin{proof}
 There exists an open subset $U \subset V_i$ such that if $P=(r,z,\Delta_{0},m) \in U$ then $z_i \neq 0$, $\forall i$. Note that, if $P\in U$, then $\left[\frac{\partial \Gamma_{l}}{\partial (\Delta,M)}(P)\right]$ and $A(r)$ are columns equivalent, so ${\textstyle\text{rank}\left(\left[\frac{\partial \Gamma_{l}}{\partial (\Delta,M)}(P)\right]\right)}=\text{rank}(A(r))$ which is equal to $n$, by Proposition \ref{moeck1}. \end{proof}

In order to estimate the dimension of the irreducible components of $V^{a}$, we remark that if $V$ is an irreducible algebraic set and $U$ is an  open subset in the induced topology on  $V$ then $\overline{U}=V$,  $\text{dim}U=\text{dim}V$ and $I(V)=I(V)$.
\begin{prop}\label{PT1}
 If $V_i \not \subset D_{k}$, $\forall k$, then $V_i$ has dimension at most $n$.
\end{prop}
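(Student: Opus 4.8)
The plan is to combine the two block computations of Lemmas \ref{jm1} and \ref{jm2} to produce a large nonsingular submatrix of the full Jacobian $J(g^{a}_{ij}, F, \Gamma_{l})(P)$ at a suitably generic point $P$ of $V_i$, and then read off the dimension bound from the Jacobian criterion. Recall that the ambient space $\mathbb{C}^{q+2n+1}$ has $q+2n+1$ coordinates and that the $q+n+2$ functions $g^{a}_{ij}, F, \Gamma_{l}$ all vanish on $V^{a}$. By the discussion following Theorem \ref{cj}, for every $P\in V^{a}$ one has
$$\dim_{P}V^{a}\le \dim(\Theta_{P}V^{a}) = (q+2n+1)-\rank J(g^{a}_{ij},F,\Gamma_{l})(P),$$
so it suffices to exhibit a single point $P\in V_i$ at which the Jacobian has rank at least $q+n+1$; since $V_i$ is a component through $P$ we have $\dim V_i\le \dim_{P}V^{a}$, and this forces $\dim V_i\le n$.

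First I would fix a point $P$ in the open set $U\subset V_i$ provided by Lemma \ref{jm2}, which is nonempty \emph{precisely} because $V_i\not\subset D_k$ for every $k$; shrinking $U$ if necessary I also take $P$ off the other components so that $\dim_{P}V^{a}=\dim V_i$. At such a point both lemmas apply simultaneously: Lemma \ref{jm1} supplies a nonsingular $(q+1)\times(q+1)$ submatrix $H(P)$ sitting in the $(g^{a}_{ij},F)$-rows and using the $q$ columns $\partial/\partial R$ together with the single column $\partial/\partial Z_{1}$, while Lemma \ref{jm2} supplies a nonsingular $n\times n$ submatrix $G$ of the block $\left[\frac{\partial \Gamma_{l}}{\partial(\Delta,M)}(P)\right]$, lying in the $\Gamma$-rows and in columns among $\partial/\partial\Delta,\partial/\partial M_{j}$.

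The key observation is that the row sets and column sets of $H$ and $G$ are disjoint, and that the relevant cross block vanishes identically: since neither $g^{a}_{ij}=R_{ij}^{-2a}(Z_{i}Z_{j}+1)-1$ nor the Cayley--Menger determinant $F=F(R)$ involves the variables $\Delta,M_{j}$, every entry of $\frac{\partial g^{a}_{ij}}{\partial(\Delta,M)}$ and of $\frac{\partial F}{\partial(\Delta,M)}$ is zero. Therefore the $(q+n+1)\times(q+n+1)$ submatrix of $J$ obtained from the $q+1$ rows of $H$ together with the $n$ rows of $G$, and from the $q+1$ columns of $H$ together with the $n$ columns of $G$, is block lower triangular of the form $\left(\begin{smallmatrix} H & 0 \\ B & G\end{smallmatrix}\right)$, with determinant $\det H\cdot\det G\neq 0$. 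Hence $\rank J(P)\ge q+n+1$, and the displayed inequality yields $\dim V_i\le (q+2n+1)-(q+n+1)=n$.

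I expect the only delicate point to be the bookkeeping that guarantees the block lower-triangular form, namely verifying that the columns chosen for $G$ are exactly those on which the rows of $H$ have vanishing entries, and that $P$ can be taken in the smooth locus of $V_i$ away from the remaining components so that $\dim_{P}V^{a}=\dim V_i$; both reduce to the independence of $g^{a}_{ij}$ and $F$ from $(\Delta,M)$ and to shrinking $U$. The substantive content has already been isolated in Lemmas \ref{jm1} and \ref{jm2} (the latter resting on Proposition \ref{moeck1}), so the present statement is essentially their assembly together with the dimension estimate from Theorem \ref{cj}.
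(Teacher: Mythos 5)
Your proposal is correct and takes essentially the same approach as the paper: the paper's proof likewise picks $P$ in the open set $U$ from Lemma \ref{jm2} and forms the nonsingular block-triangular submatrix $M(P)=\left(\begin{smallmatrix} H & 0 \\ Y & G \end{smallmatrix}\right)$ of order $q+n+1$, with $H$ from Lemma \ref{jm1}, $G$ from Lemma \ref{jm2}, and the zero block coming from the fact that $g^{a}_{ij}$ and $F$ do not involve $(\Delta,M)$. The only cosmetic difference is that you spell out the tangent-space inequality $\dim_{P}V^{a}\leq \dim(\Theta_{P}V^{a})\leq (q+2n+1)-\rank J(g^{a}_{ij},F,\Gamma_{l})(P)$ explicitly, where the paper simply invokes the Jacobian criterion together with the remark following Theorem \ref{cj}.
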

\begin{proof}
Let $U \subset V_i$ be as in the Lemma $\ref{jm2}$. For $P \in U$, there exists, $M(P)$, a square submatrix of order 
$q+n+1$ of $J(g^{a}_{ij}, F, \Gamma_{l} )(P)$ such that:
\renewcommand{\arraystretch}{1.5}
 $$M(P)=\left(\begin{array}{c|c}
 H_{(q+1)\times (q+1)}&  0_{(q+1) \times n}\\ \hhline{-|-}
  Y_{n \times (q+1) } &G_{n \times n}  
  \end{array}\right)$$
  for which, $H$ as in the Lemma $\ref{jm1}$, $G$ is like in the Lemma $\ref{jm2}$, $0_{(q+1)\times n}$ is a null submatrix of $\left[\frac{\partial (g_{ij}^{a},F)}{\partial(\Delta,M)}\right]$ and $Y$ is a submatrix of $\left[\frac{\partial \Gamma_{l}}{\partial(R,Z)}\right]$. Since $M(P)$ is block triangular   and the matrices $H$ and $G$ are non-singular , $M(P)$ is non singular of order $n+q+1$.  Hence, by the Jacobian criterion, $\text{dim}_{P}(U)=\text{dim}_{P}(V_i)\leq n.$   This implies that $\text{dim}(U)=\text{dim}(V_i)\leq n$.\end{proof}

\begin{lemma}\label{jm4}
Let $V_{i}$ be an irreducible component of $V^{a}$ such that $V_i \subset D_k$ for some $k$, then there exists an open subset $U \in V_i$ and $t\in \{1,...,n\}$ such that for each $P\in U$, we have that $Z_1,...,Z_t \in I(U)$ and $n-t \leq {\textstyle\text{rank}\left(\left[\frac{\partial \Gamma_{l}}{\partial (\Delta,M)}(P)\right]\right)} \leq (n-t)+1.$ In particular,  for each $P\in U$, $\left[\frac{\partial \Gamma_{l}}{\partial (\Delta,M)}(P)\right]$
has a non-singular square submatrix $G$ of order $n-t$ or $n-t+1$.
\end{lemma}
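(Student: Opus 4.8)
The plan is to let $t$ record exactly the coordinate functions among $Z_1,\dots,Z_n$ that vanish identically on $V_i$, and then to read off the rank of $\left[\frac{\partial \Gamma_{l}}{\partial (\Delta,M)}(P)\right]$ from the Cayley--Menger matrix after deleting the columns killed by those vanishing coordinates. Concretely, I set $t=\#\{k:\,V_i\subseteq D_k\}$ and relabel so that $V_i\subseteq D_k$ precisely for $k=1,\dots,t$; then $Z_1,\dots,Z_t\in I(V_i)$, while for each $k>t$ the function $Z_k$ is not identically zero on the irreducible set $V_i$. Hence $U:=V_i\cap\bigcap_{k>t}\{Z_k\neq 0\}$ is a nonempty open subset, on which $z_1=\dots=z_t=0$ and $z_{t+1},\dots,z_n\neq 0$; since $U$ is a nonempty open subset of the irreducible $V_i$ we have $I(U)=I(V_i)$, which contains $Z_1,\dots,Z_t$.

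Next I analyze the block $\left[\frac{\partial \Gamma_{l}}{\partial (\Delta,M)}(P)\right]$ written out in \eqref{jacayley}. As already observed in the proof of Lemma \ref{jm2}, its $\Delta$-column is the $0$-th column $c_0$ of $A(r)$ and its $M_j$-column equals $z_j$ times the $j$-th column $c_j$ of $A(r)$. On $U$ the columns $M_1,\dots,M_t$ therefore vanish, so $\rank\left[\frac{\partial \Gamma_{l}}{\partial (\Delta,M)}(P)\right]$ equals the rank of the $(n+1)\times(n-t+1)$ submatrix of $A(r)$ formed by the columns $c_0,c_{t+1},\dots,c_n$. This at once yields the upper bound $\rank\le n-t+1$, since that submatrix has only $n-t+1$ columns.

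The main point --- and the step I expect to be the real obstacle --- is the lower bound $\rank\ge n-t$. The idea is that the kernel of this column submatrix injects into $\ker A(r)$ by padding a relation $\mu_0 c_0+\sum_{j>t}\mu_j c_j=0$ with zeros in the deleted positions $1,\dots,t$, so it suffices to bound $\dim\ker A(r)$. Here I use that every point of $V^{a}$ lies outside $W_2$ and satisfies $F(R)=0$: the equation $F=0$ forces $\rank A(r)\le n$, while $P\notin W_2$ means that not all cofactors $F_{ij}$ vanish, i.e. $\rank A(r)\ge n$ (this is Proposition \ref{newequation} in the case $k=n-2$). Hence $\rank A(r)=n$ and $\dim\ker A(r)=1$ at every $P\in V^{a}$. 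Consequently the kernel of the column submatrix has dimension at most $1$, and rank--nullity gives $\rank\ge(n-t+1)-1=n-t$.

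Finally, combining the two bounds yields $n-t\le \rank\left[\frac{\partial \Gamma_{l}}{\partial (\Delta,M)}(P)\right]\le (n-t)+1$ for every $P\in U$, and the existence of a nonsingular square submatrix $G$ of order $n-t$ or $n-t+1$ is the matrix-theoretic reformulation of this rank estimate. If one wants a single $G$ valid on all of $U$, I would shrink $U$ further to the nonvanishing locus of a fixed minor of the appropriate order, which is an open condition; by lower semicontinuity of the rank this locus is again a nonempty open subset of $V_i$, preserving all the properties established above.
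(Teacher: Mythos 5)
Your proof is correct and follows essentially the same route as the paper's: the same choice of $t$ and of the open set $U$ where $z_1=\dots=z_t=0$ and $z_{t+1},\dots,z_n\neq 0$, the same identification of the columns of $\left[\frac{\partial \Gamma_{l}}{\partial (\Delta,M)}(P)\right]$ in \eqref{jacayley} with the zero columns and the rescaled columns of $A(r)$, and the same use of $\rank(A(r))=n$ to get the two rank bounds. Your kernel-injection/rank--nullity step is simply a rigorous rendering of the paper's remark that at most one of the $n-t+1$ surviving columns can be a linear combination of the others, and your derivation of $\rank(A(r))=n$ from $F=0$ together with $P\notin W_2$ fills in a detail the paper asserts without justification (though attributing that fact to Proposition \ref{newequation} is slightly off --- it follows directly from the definition of $V^{a}$, since $W_2$ is exactly the locus where all cofactors of $A(R)$ vanish).
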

 \begin{proof} In this case we can suppose, without loss of generality, that there exists unique $t\in\{1,...,n\}$ such that $V_{i}\subset \cap_{k=1}^{t}D_{k}$ and $V_{i}\not \subset D_{t+1},...,D_{n}$ .  There exists an open subset $U\subset V_i$ such that, for every $P \in U$, we have that $z_1=...=z_t = 0$ and $z_{t+1},...,z_n\neq0$. In particular $Z_1,...,Z_t \in I(U)$.  Let $v_0,v_1,...,v_{n}$ be the column vectors  of $\left[\frac{\partial \Gamma_{l}}{\partial (\Delta,M)}(P)\right]$ in \eqref{jacayley}. All the entries of the columns  $v_1,...,v_{t}$ are equal to zero. The columns  $v_0,v_{t+1},...,v_{n}$ are, up to rescaling with a non-zero constant, column vectors of the matrix $A(r)$. Since $A(r)$ has rank $n$, no more than one of the $n-t+1$ column vectors is a linear combination of the others, so the linear space generated by them has dimension $n-t$ or $n-t+1$. Hence we get the desired inequality.
\end{proof}

\begin{prop}
If  $V_{i}\subset D_{k}$, for some $k$,  then $\text{dim}V_i \leq n$.
\end{prop}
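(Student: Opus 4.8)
The plan is to follow the proof of Proposition \ref{PT1}, using the extra relations valid on $V_i$ to recover the rank that the block $\left[\frac{\partial\Gamma_l}{\partial(\Delta,M)}\right]$ loses once some of the $z_i$ vanish. By Lemma \ref{jm4} I may pass to a dense open set $U\subset V_i$ and an integer $t$ with $z_1=\cdots=z_t=0$ and $z_{t+1},\dots,z_n\neq 0$ on $U$, so that $Z_1,\dots,Z_t\in I(U)=I(V_i)$ and $\left[\frac{\partial\Gamma_l}{\partial(\Delta,M)}(P)\right]$ carries a nonsingular submatrix $G$ of order $n-t$. The decisive observation is that, as $Z_1,\dots,Z_t$ already lie in $I(V_i)$, I may adjoin them to the functions whose Jacobian I test: for any finite set $S\subset I(V_i)$ one has $\dim_P V_i\le (q+2n+1)-\rank J(S)(P)$, since the differentials at $P$ of the elements of $I(V_i)$ span the conormal space (this is the upper bound recorded after Theorem \ref{cj}). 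It therefore suffices to exhibit, for $S=\{g^a_{ij},F,\Gamma_l,Z_1,\dots,Z_t\}$, a nonsingular submatrix of $J(S)(P)$ of order $q+n+1$.

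First I would reproduce the block $H(P)$ of Lemma \ref{jm1} from the rows $g^a_{ij},F$ and the columns $R$ together with a single column $Z_{i_0}$; the column operations of that lemma reduce $H(P)$ to triangular form with determinant a nonzero multiple of $\Psi^a_{i_0}(P)$. I claim $i_0$ can be chosen in $\{t+1,\dots,n\}$. On $U$ the matrix $A(r)$ has rank $n$ (Lemma \ref{jm4}), so its adjugate has rank one, and exactly as in Proposition \ref{2z1} there is a constant $\alpha\neq 0$ with $F_{ij}(P)=\alpha\,m_iz_i\,m_jz_j$ for $i,j\ge 1$; in particular $\frac{\partial F}{\partial R_{li_0}}(P)=4r_{li_0}F_{li_0}(P)=4\alpha\,r_{li_0}m_lz_l\,m_{i_0}z_{i_0}$, whence for $i_0>t$
\begin{equation*}
\Psi^a_{i_0}(P)=4\alpha\,m_{i_0}z_{i_0}\sum_{\substack{l>t\\ l\neq i_0}}r_{li_0}^{-2a+2}m_lz_l^{2}.
\end{equation*}
Because $\Gamma_0=\sum_j M_jZ_j$ vanishes on $V^a$, the nonzero $z_{t+1},\dots,z_n$ satisfy $\sum_{j>t}m_jz_j=0$, which forces $n-t\ge 2$; since the $z_l$ are all real or all pure imaginary and $a<0$, every summand above has the same nonzero sign, so the sum does not vanish. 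Thus $\Psi^a_{i_0}(P)\neq 0$ and $H(P)$ is nonsingular for this $i_0>t$.

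With such $i_0$ fixed, I would take the column blocks $R$, $Z_{i_0}$, $Z_1,\dots,Z_t$ and the $n-t$ columns realizing $G$, together with the row blocks $g^a_{ij},F$, the coordinate covectors $dZ_1,\dots,dZ_t$, and the $\Gamma_l$-rows of $G$. This produces the square matrix of order $(q+1)+t+(n-t)=q+n+1$
\begin{equation*}
\begin{pmatrix} H & B & 0\\ 0 & I_t & 0\\ C_1 & C_2 & G\end{pmatrix},
\end{equation*}
where the top-right zero records that $g^a_{ij},F$ are independent of $\Delta,M$, and the middle row block consists of the covectors $dZ_j$, which vanish off the columns $Z_1,\dots,Z_t$ and give $I_t$ there (here $i_0>t$ ensures $Z_{i_0}$ is disjoint from $Z_1,\dots,Z_t$). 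A block expansion gives determinant $\det(H)\det(I_t)\det(G)\neq 0$, so $\rank J(S)(P)\ge q+n+1$ and $\dim_P V_i\le (q+2n+1)-(q+n+1)=n$; since $U$ is dense, $\dim V_i=\dim U\le n$.

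I expect the main obstacle to be exactly the bookkeeping that keeps this block matrix nonsingular: the $t$ units of rank lost by $G$ (now of order $n-t$ rather than $n$) must be made up by precisely the $t$ covectors $dZ_1,\dots,dZ_t$, and for the three blocks to stay column-disjoint the pivot column $Z_{i_0}$ of $H$ has to be chosen with $i_0>t$. Showing that such a column exists, i.e.\ that some $\Psi^a_{i_0}$ with $i_0>t$ is not identically zero on $V_i$, is the heart of the matter, and it is here that the rank-one structure of the adjugate of $A(r)$ and the presence of at least two nonzero $z_i$ are indispensable. The case $\rank G=n-t+1$ permitted by Lemma \ref{jm4} only sharpens the bound, so the construction above, which uses the minimal order $n-t$, covers it as well.
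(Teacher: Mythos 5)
Your overall strategy is exactly the paper's: adjoin $Z_1,\dots,Z_t\in I(V_i)$ to the defining functions, assemble a square submatrix of order $q+n+1$ from the block $H$ of Lemma \ref{jm1}, an identity block coming from $dZ_1,\dots,dZ_t$, and a nonsingular block $G$ of $\left[\frac{\partial\Gamma_l}{\partial(\Delta,M)}\right]$ furnished by Lemma \ref{jm4}, then conclude via the tangent-space bound. You have also correctly isolated the one point the paper's proof passes over in silence: for the three column groups to be disjoint (and for the bottom-left blocks to vanish) the pivot column $Z_{i_0}$ of $H$ must have $i_0>t$, whereas membership in $V^a=\widetilde{V}^a\setminus(W_1\cup W_2\cup W^a_3)$ only guarantees that \emph{some} $\Psi^a_{i_0}(P)\neq 0$.

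The gap is in your proof that such an $i_0>t$ exists. You argue at an arbitrary point $P\in U$, but $U$ is an open subset of an irreducible component of the \emph{complex} quasi-affine set $V^a$, and you invoke properties that hold only at the special points $P_x$ associated to genuine central configurations (Theorem \ref{newthm}, Proposition \ref{crucial}): that the $z_l$ are all real or all pure imaginary, that the masses are positive (so that ``every summand has the same nonzero sign''), and that $\sum_{j>t}m_jz_j=0$ forces $n-t\geq 2$ (false if the coordinates $m_j$ are allowed to vanish). Over $\mathbb{C}$ none of this is available; indeed, the whole reason the paper removes $W^a_3$ by fiat is that the positivity argument of Proposition \ref{crucial} cannot be run at general points of $\widetilde{V}^a$. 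There is also a prior gap: the identity $F_{ij}(P)=\alpha\,m_iz_i\,m_jz_j$ requires the vector $(\Delta_0,m_1z_1,\dots,m_nz_n)$ --- which the equations $\Gamma_l(P)=0$ only place \emph{in} $\ker A(r)$ --- to be nonzero, hence a generator of the one-dimensional kernel; at points of $U$ with $m_{t+1}=\dots=m_n=0$ and $\Delta_0=0$ it is the zero vector and the identity fails. Note that wherever that identity does hold, your sign computation is superfluous: since $z_i=0$ for $i\leq t$, one gets $\Psi^a_i(P)=4\alpha\,m_iz_i\sum_{l\neq i}r_{li}^{-2a+2}m_lz_l^2=0$ for every $i\leq t$, so the index witnessing $P\notin W^a_3$ is automatically larger than $t$, with no appeal to signs. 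What neither you nor the paper addresses is the degenerate locus $(\Delta_0,m_1z_1,\dots,m_nz_n)=0$: a component $V_i$ contained in it needs a separate argument (for instance, such a component has $m_{t+1}\equiv\dots\equiv m_n\equiv 0$, hence is not mass dominant and is harmless in Theorem \ref{rp}, or its dimension can be bounded directly from the equations $g^a_{ij}=0$).
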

\begin{proof}
Let $U\subset V_i$ and $t\in\{1,...,n\}$ like in Lemma $\ref{jm4}$. The polynomials $g^{a}_{ij}$, $F$, $\Gamma_{l}$, $Z_{1},...Z_t$ belong to the ideal of $U$. Analogous to the proof of the Proposition $\ref{PT1}$, consider, for every $P \in U$, the following submatrix of $J(g^{a}_{ij}, F, \Gamma_{l}, Z_{1},...Z_t)(P)$:
\renewcommand{\arraystretch}{1.5}
 $$M(P)=\left(\begin{array}{c|c|c}
H_{(q+1) \times (q+1)}&  0_{(q+1)\times u}&0_{(q+1) \times t}\\ \hhline{-|-|-}
  Y_{u \times (q+1) } &G_{ u\times u }&0_{u \times t} \\ \hhline{-|-|-}
  0_{t\times (q+1)}&0_{t\times u}&I_{t\times t}  
  \end{array}\right)_{(q+n+1) \times (q+n+1)}$$
for which $H$ is as in the Lemma \ref{jm1}, $G$ is a square submatrix of $\left[\frac{\partial \Gamma_{l}}{\partial (\Delta,M)}\scriptsize{(P)}\right]$ of maximal rank and  $0_{t\times(q+1)}$, $0_{t\times u}$ and $I_{t \times t}$, the identity matrix of order $t$, are submatrices of $J(Z_1,...,Z_t)(P)$. Note that, by Lemma \ref{jm4}, the order of $G$ is $u=n-t \text{ or } (n-t)+1$.
Since $M(P)$ is block-diagonal, and the matrices $H,G$ and $I$ are non-singular, $M(P)$ is non singular. In this way, the rank of the Jacobian matrix $J(g^{a}_{ij}, F, \Gamma_{l}, Z_1,...,Z_t )(P)$ is greater than or equal to  $n+q+1$. So, by Jacobian criterion,  $\text{dim}_{P}(U)=\text{dim}_{P}(V_i)\leq n$ and, consequently, $\text{dim}(U)=\text{dim}(V_i)\leq n$.
\end{proof}

\begin{theorem}
$\text{dim}(V^a)\leq n$, for every $a\in \frac{\mathbb{Z}}{2}\setminus\{0\}$.
\end{theorem}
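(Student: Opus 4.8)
The plan is to reduce the global bound $\text{dim}(V^a)\leq n$ to the two per-component estimates already established. First I would invoke the decomposition $V^{a}=V_{1}\cup\cdots\cup V_{b}$ of $V^{a}$ into irreducible components, introduced just before Lemma \ref{jm2}, and recall that the dimension of an algebraic set equals the maximum of the dimensions of its irreducible components. Consequently, it suffices to bound $\text{dim}(V_i)$ for each index $i$ and then take the maximum.

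Each irreducible component $V_i$ falls into exactly one of the two mutually exclusive and exhaustive cases treated above: either $V_i\not\subset D_k$ for all $k$, or $V_i\subset D_k$ for some $k$. In the first case, Proposition \ref{PT1} gives $\text{dim}(V_i)\leq n$; in the second case, the proposition immediately following Lemma \ref{jm4} gives $\text{dim}(V_i)\leq n$ as well. Since these cases exhaust all components, we obtain $\text{dim}(V_i)\leq n$ for every $i$, and taking the maximum over the finitely many components yields $\text{dim}(V^{a})\leq n$.

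Finally, I would observe that the entire case analysis was carried out for $a<0$; for $a>0$ the polynomials $g^a_{ij}$, the auxiliary polynomials $\Psi^a_i$, and the corresponding Jacobian blocks admit the analogous expressions already recorded in Section \ref{s4} and in Lemma \ref{jm1}, so the same block-triangular submatrix construction and the same stratification by the sets $D_k$ apply verbatim, giving $\text{dim}(V^a)\leq n$ for all $a\in\frac{\mathbb{Z}}{2}\setminus\{0\}$. I expect no genuine obstacle to remain at this stage: all the difficulty has been absorbed into the preceding propositions, where the crucial points were the production of the nonsingular order-$(q+1)$ block $H(P)$ from the non-vanishing of $\Psi^a_1(P)$ in Lemma \ref{jm1}, and the control of the rank of the Cayley--Menger Jacobian block $\left[\frac{\partial \Gamma_l}{\partial(\Delta,M)}(P)\right]$ on the degenerate stratum $V_i\subset D_k$ in Lemma \ref{jm4}. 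The present theorem is then simply the remark that the maximum of finitely many quantities, each at most $n$, is itself at most $n$.
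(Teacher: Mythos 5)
Your proposal is correct and follows essentially the same route as the paper: both arguments simply combine Proposition \ref{PT1} and the proposition following Lemma \ref{jm4}, which together cover the two exhaustive cases for an irreducible component, and then take a maximum (you maximize over components, the paper maximizes local dimensions $\text{dim}_P V^a$ over points, which is the same thing). Your explicit remark about the case $a>0$ matches the paper's earlier declaration that this case is handled analogously.
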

\begin{proof}
For every point $P \in V^a$, we have that $\text{dim}_{P}V_{i}\leq n$, for every component  $V_i$ of $V$ such that $P \in V_i$. 
Hence $\text{dim}_{P}V^{a} \leq n$ for every point $P \in V^a$. Since $\text{dim}V^{a}=\text{max}_{P\in V^{a}}\text{dim}_{P}V^{a}$, the result holds.
\end{proof}

Now, we will follow the last section in \cite{moeckel2001generic} in order to obtain the result of generic finiteness for central configurations with homogeneous semi-integer potentials.

Consider the projection $\pi: V^{a} \rightarrow \mathbb{C}^{n}$ such that $\pi(r,z,\Delta_0,m)=m$. Note that  $\pi$ is a regular morphism. If the restriction of $\pi$ to $V_i$ is a dominant map we say that $V_{i}$ is \emph{mass dominant}.

 \begin{theorem}\label{rp}
There exists an algebraic set $\widetilde{B} \subset \mathbb{C}^{n}$, such that if $m\in \mathbb{C}^{n}\setminus \widetilde{B}$, then the fibre $\pi^{-1}(m)$ is finite.
\end{theorem}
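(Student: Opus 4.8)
The plan is to reduce the statement to the \emph{fibre dimension theorem} for morphisms of quasi-projective varieties, applied component by component to $\pi$, after invoking the bound $\dim(V^{a})\leq n$ established above. First I would decompose $V^{a}=V_1\cup\cdots\cup V_b$ into its finitely many irreducible components and consider the restrictions $\pi_i=\pi|_{V_i}\colon V_i\to\mathbb{C}^{n}$, each of which is again a regular morphism of irreducible quasi-affine (hence quasi-projective) varieties. The components then split into two families according to whether they are mass dominant.

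For a component $V_i$ that is \emph{not} mass dominant, the Zariski closure $\overline{\pi(V_i)}$ is a proper algebraic subset of $\mathbb{C}^{n}$; I would record this set as a piece of the eventual bad locus, since removing it forces $m\notin\pi(V_i)$ and hence $\pi_i^{-1}(m)=\emptyset$. For a component $V_i$ that \emph{is} mass dominant, dominance cannot decrease dimension, so $\dim V_i\geq n$; combined with $\dim V_i\leq\dim V^{a}\leq n$ this yields $\dim V_i=n$. The fibre dimension theorem then provides a nonempty (dense) open subset $U_i\subseteq\mathbb{C}^{n}$ such that for every $m\in U_i$ the fibre $\pi_i^{-1}(m)$ has dimension $\dim V_i-n=0$, i.e.\ is finite; I would record the proper closed complement $\mathbb{C}^{n}\setminus U_i$ as another piece of the bad locus.

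I then set $\widetilde{B}$ to be the union, taken over all components, of the proper closed sets $\overline{\pi(V_i)}$ (non-dominant case) and $\mathbb{C}^{n}\setminus U_i$ (dominant case). As this is a finite union of proper algebraic subsets of $\mathbb{C}^{n}$, the set $\widetilde{B}$ is itself a proper algebraic subset. For any $m\in\mathbb{C}^{n}\setminus\widetilde{B}$ the fibre decomposes as $\pi^{-1}(m)=\bigcup_{i=1}^{b}\pi_i^{-1}(m)$, where each non-dominant component contributes the empty set and each dominant component contributes a finite set; a finite union of finite sets is finite, which is the assertion.

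The main obstacle is the dominant case, and more precisely the passage from the estimate $\dim V_i\geq n$ forced by dominance to the equality $\dim V_i=n$ that makes the generic fibre zero-dimensional: this is exactly the step where the previously proved bound $\dim(V^{a})\leq n$ is indispensable. A secondary technical point is to apply the fibre dimension theorem in the quasi-projective setting (rather than only for projective varieties), so that it is legitimately available for the quasi-affine components $V_i$; for the finiteness conclusion it suffices that the generic fibre over $m\in U_i$ be finite (possibly empty), so no further appeal to the constructibility of $\pi(V_i)$ is required.
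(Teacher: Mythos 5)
Your proposal is correct and follows essentially the same route as the paper: decompose $V^{a}$ into irreducible components, excise $\overline{\pi(V_i)}$ for the non--mass-dominant components, use dominance together with the bound $\dim(V^{a})\leq n$ to get $\dim V_i=n$ for the dominant ones, apply the fibre dimension theorem to each of those, and take $\widetilde{B}$ to be the finite union of the resulting bad sets. Your write-up is in fact slightly more explicit than the paper's (notably in spelling out where $\dim(V^{a})\leq n$ is used and why $\widetilde{B}$ is proper), but the argument is the same.
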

 \begin{proof}If $V_{i}$ is not  mass dominant, defining $B_i=\overline{\pi(V_i)}$ for every $m\in \mathbb{C}^{n}\setminus B_i$,  $\pi^{-1}(m) \cap V_i$ is empty.

If $V_j$ is mass dominant, then  $\text{dim} V_i \geq \text{dim} \mathbb{C}^{n}=n$. Hence $\text{dim}(V_j)=n$. Since the projection is regular, $V_j$ and $\mathbb{C}^n$ are  irreducible, by the fibre dimension Theorem, we have that there exists an closed set $A_j$ such that if $m \in \mathbb{C}^n\setminus A_j$, then $\pi^{-1}(m) \cap V_j$ is finite.

 Consider $\widetilde{B}=\big((\cup_{i}B_{i}) \cup (\cup_{j}A_j)\big)$. By choosing $m\in \mathbb{C}^{n} \setminus \widetilde{B}$ we have that the  fibre $\pi^{-1}(m)$ is finite.

 \end{proof}

\begin{theorem}
There exists a proper subvariety of the space of mass, $B \subset \mathbb{R}^{n}$, such that if $m\in \mathbb{R}^{n}\setminus B$, then for every $a \in \frac{\mathbb{Z}}{2}$, $m$ admits only a finite number of $(n-2)$-dimensional central configurations of dimension $\delta(x)=n-2$ with potential $U_a$ up to symmetry.
\end{theorem}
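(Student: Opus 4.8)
The plan is to reduce the counting of central configuration classes to the finiteness of the fibres of the mass projection $\pi\colon V^a\to\mathbb{C}^n$, and then to combine the already-established dimension bound $\dim V^a\le n$ with the fibre-finiteness statement of Theorem \ref{rp}. First I would fix a nonzero semi-integer $a$. By Theorem \ref{newthm} each class in $\mathfrak{X}^a$ is carried by the (at most two-valued) map $f_a$ to points $P_x,P_x^{*}\in\widetilde V^a$ whose mass coordinate equals $m$, and by Proposition \ref{pnew} these points in fact lie in $V^a$. Since a class of central configurations with $r_0=1$ is determined up to symmetry by its vector of mutual distances $r=(r_{ij})$, and this vector is part of the coordinates of $P_x$, distinct classes sharing the same mass produce points of $V^a$ with distinct $r$-coordinates, hence distinct points. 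Consequently the number of classes in $\mathfrak{X}^a$ with mass $m$ is bounded above by the cardinality of the fibre $\pi^{-1}(m)\cap V^a$, and in particular is finite whenever that fibre is finite.

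Next, still for this fixed $a$, I would apply Theorem \ref{rp}, which — using the splitting of $V^a$ into mass-dominant and non-mass-dominant components together with the bound $\dim V^a\le n$ — yields a proper algebraic subset $\widetilde B^a\subset\mathbb{C}^n$ such that $\pi^{-1}(m)\cap V^a$ is finite for every $m\notin\widetilde B^a$. Setting $B^a:=\widetilde B^a\cap\mathbb{R}^n$, the first paragraph then gives that for every real mass $m\in\mathbb{R}^n\setminus B^a$ there are only finitely many classes of $(n-2)$-dimensional central configurations with exponent $a$. That $B^a$ is a \emph{proper} subvariety of $\mathbb{R}^n$ is precisely what guarantees that a generic physical mass vector admits only finitely many such configurations; this is where the nontriviality of the relevant resultant enters, ensuring that the mass-dominant components of $V^a$ have finite generic fibres rather than filling the whole mass space.

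Finally, to produce a single bad set valid for every exponent, I would take $B:=\bigcup_{a\in\frac{\mathbb{Z}}{2}\setminus\{0\}}B^a$, so that any $m\in\mathbb{R}^n\setminus B$ satisfies $m\notin B^a$ for all $a$ and hence enjoys finiteness simultaneously for every semi-integer potential. I expect the genuine difficulty to be concentrated exactly here, in the uniformity over $a$: the index set $\frac{\mathbb{Z}}{2}\setminus\{0\}$ is infinite, so although each $B^a$ is a proper real algebraic set (in particular Lebesgue-null, with dense full-measure complement), an infinite union of such sets is not automatically contained in a single proper Zariski-closed subvariety. Controlling this union uniformly in $a$ — equivalently, bounding the loci $\widetilde B^a$ through the coefficients of the $|2a|$-th cyclotomic polynomial, as flagged in the Introduction — is the step I expect to be the main obstacle, and it is what distinguishes the clean per-exponent statement from the uniform one asserted in the theorem.
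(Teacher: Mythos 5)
Your argument for a fixed exponent $a$ is essentially the paper's own proof, spelled out more carefully: the paper likewise takes the set $\widetilde B$ of Theorem \ref{rp}, puts $B:=\widetilde B\cap\mathbb{R}^{n}$ (proper in $\mathbb{R}^{n}$ because $\mathbb{R}^{n}$ is Zariski-dense in $\mathbb{C}^{n}$, so no nonzero polynomial vanishes on all of it), deduces that a generic mass vector admits only finitely many possible mutual-distance vectors, and concludes because the mutual distances determine the configuration up to rotation and reflection. Your intermediate reduction through Theorem \ref{newthm} and Proposition \ref{pnew} (classes of $\mathfrak{X}^{a}$ inject, via their $r$-coordinates, into the fibre $\pi^{-1}(m)\cap V^{a}$) is exactly what the paper leaves implicit. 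One correction: no resultant enters anywhere in this paper's mechanism. What makes the mass-dominant components have generically finite fibres is the Jacobian-criterion bound $\dim V^{a}\le n$ (forcing $\dim V_{j}=n$ for dominant components) combined with the fibre dimension theorem; the resultant/cyclotomic discussion in the Introduction describes Moeckel's original method, which this paper is deliberately avoiding.

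The uniformity-in-$a$ obstacle you flag in your last paragraph is genuine, but be aware that the paper does not resolve it either: its proof implicitly fixes one $a$, takes the corresponding $\widetilde B$ from Theorem \ref{rp} (which depends on $a$ through $V^{a}$, i.e.\ through $g^{a}_{ij}$ and $\Psi^{a}_{i}$), and stops. No argument is given that the union $\bigcup_{a} B^{a}$ over the infinitely many semi-integer exponents is contained in a single proper subvariety, and indeed a countable union of proper real algebraic sets need not be. So, read literally with its quantifier order, the theorem asserts more than the written proof establishes; what is actually proved is the per-exponent statement you obtain in your first two paragraphs. Relative to the paper, your proposal is not missing any idea -- you have reproduced the argument and, in addition, correctly located the point where it falls short of the statement.
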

\begin{proof} If the proper subvariety $\widetilde{B}\subset \mathbb{C}^{n}$ is the algebraic set obtained in Theorem \ref{rp} then  we have that $B:=\widetilde{B}\cap\mathbb{R}^{n}\subset \mathbb{R}^{n}$ is a proper subvariety of $\mathbb{R}^{n}$, and if $m \in \mathbb{R}^{n} \setminus B$ the fibre $\pi^{-1}(m)$ is finite. This implies that for a fixed generic mass vector $m$,  we have a finite number of possibilities for the mutual distances  $r_{ij}$ associated to a $(n-2)-$dimensional central configurations. The mutual distances determine the configurations up to rotation and reflection. Hence, the Theorem is proved.
\end{proof}
\begin{remark}Let $x$ be a $(n-2)-$dimensional central configurations with at least one negative mass. Note that, using the same argument of the Proposition \ref{newthm},  we can associate $x$ to the points   $P_x$ and $P^{*}_{x} \in \widetilde{V}_{a}$.  However, in this case,  Propositions \ref{crucial} and \ref{pnew} are no longer true. In the other hand, if $\text{dim}_{P_{x}}(\widetilde{V}_a)>n$ then the fibre $\pi^{-1}(m)$ in $\widetilde{V}_{a}$ has dimension greater than or equal to $1$.
Hence, we believe that is possible to produce continuum of $(n-2)-$central configurations searching points $P_x\in W_3\cap \widetilde{V}_{a}$ and $\text{dim}_{P_{x}}(\widetilde{V}_a)>n$.
\end{remark}
Finally, we prove that for a generic  choice of masses the number of the correspondent $(n-2)-$dimensional central configurations
has an upper bound that is independent of the  choice of $m$.

Denote  a point $(r_{12},...,r_{(n-1)n},\kappa,w_1,...,w_n)\in \mathbb{R}^{q+n+1}$ by $(r,\kappa,w)$. Following again the last section of \cite{moeckel2001generic}, we use a Theorem provided by Thom and Milnor (\cite{Milnor} and \cite{Thom}): 

\begin{theorem}\label{MT}
Consider $f_1,...,f_m \in \mathbb{R}[x_1,...,x_n]$ and  define the set $X=\{x \in \mathbb{R}^{n}: f_1=...=f_m=0\}$. Then, the number of connected components   $X$ is lower or equal to $\beta(2\beta-1)^{n-1}$, for which $\beta=\text{max}\{\text{deg}(f_1),...,\text{deg}(f_m)\}$.
\end{theorem}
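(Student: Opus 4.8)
The plan is to reduce the system of equations to a single equation and then bound the topology of its real zero set by a Morse-theoretic count of critical points, which in turn is estimated through B\'ezout's theorem. First I would replace the system $f_1=\cdots=f_m=0$ by the single equation $g=0$, where $g=f_1^2+\cdots+f_m^2$. Since $g\geq 0$ everywhere and $g$ vanishes precisely on $X$, we have $X=\{g=0\}=\{g\leq 0\}$, and $\deg g\leq 2\beta$. This is the key simplification: it suffices to bound the number of connected components of the zero set of a single nonnegative polynomial of degree at most $2\beta$.

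Next I would pass to a compact, nonsingular model. If $X$ is unbounded one first arranges that the sublevel sets $\{g\leq\epsilon\}$ are compact (for instance by intersecting with a ball of large radius, or by adding a coercive term that does not disturb the components one is counting). By Sard's theorem there are arbitrarily small regular values $\epsilon>0$, and for such $\epsilon$ the level set $M_\epsilon=\{g=\epsilon\}$ is a smooth compact hypersurface bounding the compact region $N_\epsilon=\{g\leq\epsilon\}$. For $\epsilon$ small enough $X$ is a deformation retract of $N_\epsilon$, so in particular the number of connected components of $X$ equals that of $N_\epsilon$; it thus suffices to bound the latter.

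I would then run Morse theory using a generic linear function $\ell(x)=a\cdot x$ as height function. The critical points of $\ell$ restricted to $M_\epsilon$ are the solutions of the system consisting of $g-\epsilon=0$ together with the $n-1$ independent equations expressing that $\nabla g$ is parallel to $a$ (concretely, $\tfrac{\partial g}{\partial x_i}\,a_n-\tfrac{\partial g}{\partial x_n}\,a_i=0$ for $i=1,\dots,n-1$). These equations have degrees $2\beta$ and $2\beta-1$, so B\'ezout's theorem bounds the total number of such critical points by $(2\beta)(2\beta-1)^{n-1}$. Only part of these critical points contribute to building up the homology of the region $N_\epsilon$: they split into two classes according to the sign of $\nabla g\cdot a$, i.e.\ according to whether $\nabla g$ points with or against $a$, and only one class attaches cells as $\ell$ increases across $N_\epsilon$. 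Since the involution $a\mapsto -a$ exchanges the two classes, each accounts for at most half of the total count, whence the number of connected components of $X$ is at most $\tfrac12(2\beta)(2\beta-1)^{n-1}=\beta(2\beta-1)^{n-1}$.

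The main obstacle I anticipate is the bookkeeping at the two reduction steps, not the final count: one must verify, uniformly over the possibly singular and unbounded set $X$, that a small regular value $\epsilon$ can be chosen so that $N_\epsilon$ is compact, smooth along its boundary, and homotopy equivalent to $X$, and that $a$ can be chosen generic enough that $\ell|_{M_\epsilon}$ is a Morse function whose critical system is zero-dimensional, so that B\'ezout applies. The improvement from $2\beta$ to $\beta$ in the leading factor is exactly the gain from the inward/outward pairing; making that pairing rigorous --- equivalently, working with the region $N_\epsilon$ and its boundary rather than with the full level hypersurface --- is the delicate point of the argument. This is the classical Milnor--Thom route, and since the statement is invoked here only as a cited tool, the preceding sketch is intended to record the structure of that proof rather than to reproduce it in detail.
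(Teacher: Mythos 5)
You should first note that the paper contains no proof of Theorem \ref{MT} to compare against: the theorem is invoked as a black box, with the proof deferred to the cited works of Milnor and Thom. Your sketch follows exactly the route of Milnor's original paper --- replace the system by the single polynomial $g=f_1^2+\cdots+f_m^2$ of degree at most $2\beta$, pass to the sublevel set $N_\epsilon=\{g\le\epsilon\}$ for a small regular value $\epsilon$, bound the critical points of a generic linear form $\ell(x)=a\cdot x$ on the hypersurface $\{g=\epsilon\}$ via B\'ezout by $(2\beta)(2\beta-1)^{n-1}$, and recover the factor $\tfrac12$ from the dichotomy at boundary critical points --- so in structure it is precisely the argument behind the citation. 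For the last step, a cleaner formulation than cell-attachment plus the involution $a\mapsto-a$: on each compact component of $N_\epsilon$ the form $\ell$ attains both its minimum and its maximum, necessarily at points of $\{g=\epsilon\}$ where $a$ is respectively a negative and a positive multiple of $\nabla g$; distinct components give distinct such points, so the number of components is at most the smaller of the two class sizes, hence at most half of the total B\'ezout count.

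The one genuine gap is the reduction to the compact case, which you flag but whose proposed remedies do not work as stated. If $X$ is unbounded, distinct components of $X$ can be joined inside $\{g\le\epsilon\}$ for every $\epsilon>0$: for $X=\{y(xy-1)=0\}\subset\mathbb{R}^2$, which has three components, one has $g=y^2(xy-1)^2\le x^{-2}$ on the strips between the line and the hyperbola, so all three components lie in a single connected component of $\{g\le\epsilon\}$ no matter how small $\epsilon$ is. Thus bounding the components of $N_\epsilon$ does not bound those of $X$, and no deformation retraction of $N_\epsilon$ onto $X$ can exist. Intersecting with a large ball restores compactness but creates minima of $\ell$ on the spherical part of the boundary, which are not solutions of your critical-point system, so the B\'ezout count no longer accounts for them; adding a coercive term either raises the degree of $g$ or, if taken of degree $2$, requires a delicate coupling of its size with $\epsilon$ to prevent the same merging phenomenon. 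Milnor's paper spends real effort on exactly this point, and a complete write-up must do the same rather than dismiss it as bookkeeping.
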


\begin{theorem}
If $n\geq 4$, for every $m=(m_1,...,m_n) \in \mathbb{R}^{n} \setminus B$, the number of $(n-2)-$dimensional central configurations with mass $m_1,...,m_n$ and potential $U_a$, $a \in \frac{\mathbb{Z}}{2}$ is less than or equal to $\beta(2\beta-1)^{q+n}$, where
$\beta$ is $\text{max}\{-2a+2,2n\}$ if  $a < 0$
and $\text{max}\{2a,2n\}$  if $a > 0$.

\end{theorem}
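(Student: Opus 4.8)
The plan is to reproduce the argument of the final section of \cite{moeckel2001generic}: reduce the count to the number of connected components of an explicit real algebraic set living in $\mathbb{R}^{q+n+1}$, and then apply the Thom--Milnor bound of Theorem \ref{MT}. First I would fix a mass vector $m\in\mathbb{R}^{n}\setminus B$, so that the previous theorem guarantees only finitely many classes of $(n-2)$-dimensional central configurations with mass $m$. Since the mutual distances determine a configuration up to rotation and reflection, distinct classes correspond to distinct real distance vectors $r=(r_{ij})\in\mathbb{R}^{q}_{>0}$; thus it suffices to bound the number of admissible $r$.

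Next I would attach to each class a real point of $\mathbb{R}^{q+n+1}$. By Proposition \ref{p3} there is a constant $\kappa\neq0$ and real numbers $w_{1},\dots,w_{n}$ (namely $w_{i}=\Delta_{i}/m_{i}$) with $S_{ij}=r_{ij}^{2a}-1=\kappa w_{i}w_{j}$, and together with the kernel structure of $A(x)$ this yields the point $(r,\kappa,w)$. I would then define $X=X_{m}\subset\mathbb{R}^{q+n+1}$ as the common zero set of the relations $r_{ij}^{2a}-1=\kappa w_{i}w_{j}$ (cleared of denominators when $a<0$, i.e.\ the analogues of $g^{a}_{ij}$), the Cayley--Menger equation $F(r)=0$, and the kernel relations expressing that $(m_{1}w_{1},\dots,m_{n}w_{n})$ extends by a suitable first coordinate to a vector of $\ker A(x)$, written as $\sum_{j}m_{j}w_{j}=0$ together with the $n-1$ differences $\sum_{j}m_{j}w_{j}\bigl(r_{lj}^{2}-r_{1j}^{2}\bigr)=0$ (this avoids introducing $\Delta_{0}$ as a coordinate and keeps the ambient dimension equal to $q+n+1$). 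The crucial point is that the masses $m_{j}$ enter only as coefficients, so the degrees of these polynomials are \emph{independent of $m$}; a direct degree count, with the $g$-relations contributing $-2a+2$ (resp.\ $2a$) and the Cayley--Menger determinant contributing $2(n-1)\le 2n$, shows that the maximal degree is exactly the $\beta$ of the statement.

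Granting this, Theorem \ref{MT} bounds the number of connected components of $X$ by $\beta(2\beta-1)^{(q+n+1)-1}=\beta(2\beta-1)^{q+n}$, which is the desired uniform bound; the hypothesis $n\ge 4$ enters only through the generic finiteness inputs (Theorem \ref{rp}), and the degenerate case $a=0$ is excluded. The main obstacle is the passage from \emph{components} to \emph{configurations}: Thom--Milnor bounds connected components, so I must show that distinct classes land in distinct components, and for this I would verify that for generic $m$ the central-configuration points are \emph{isolated} in $X_{m}$ (equivalently that $X_{m}$ is zero-dimensional there). I expect this to be the hard part, and I would deduce it from the generic finiteness of the fibre $\pi^{-1}(m)$ in $V^{a}$ established in Theorem \ref{rp}, transported through the finite-to-one change of coordinates $z=\sqrt{\kappa}\,w$ (the principal branch when $\kappa>0$ and the imaginary branch when $\kappa<0$) that relates $(r,\kappa,w)$ to the coordinates $(r,z,\Delta_{0},m)$ used earlier; this may require absorbing the corresponding exceptional masses into $\widetilde{B}$, hence into $B$. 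Once isolatedness is in hand, each class is its own singleton component, distinct classes give distinct points and therefore distinct components, and the number of classes is at most $\beta(2\beta-1)^{q+n}$, uniformly in $m$.
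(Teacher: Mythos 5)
Your plan reproduces the paper's own proof almost verbatim --- the same auxiliary set $X_m$ in the coordinates $(r,\kappa,w)\in\mathbb{R}^{q+n+1}$, the same degree bookkeeping, the same application of Theorem \ref{MT} with exponent $q+n$ --- and you have correctly isolated the one step that carries all the weight: converting a bound on connected components into a bound on configurations. But the way you propose to close that step fails. The defining polynomials of $X_m$ depend on $(\kappa,w)$ only through the products $\kappa w_iw_j$ and through expressions that are homogeneous linear in $w$ (namely $\sum_j m_jw_j$ and the differences $\sum_j m_jw_j(r_{lj}^2-r_{1j}^2)$), so $X_m$ is invariant under the scaling $(r,\kappa,w)\mapsto(r,t^2\kappa,t^{-1}w)$ for every $t\neq 0$. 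Hence the change of variables $z=\sqrt{\kappa}\,w$ is \emph{not} finite-to-one --- it collapses each scaling orbit to a point --- and no point of $X_m$ with $(\kappa,w)\neq(0,0)$ is isolated: it sits on a nonconstant connected curve inside $X_m$. Since every central-configuration point has $\kappa\neq0$ and at least two $w_i\neq0$ (Proposition \ref{p3}), the isolatedness you want to prove is simply false, and ``each class is its own singleton component'' cannot hold. (For what it is worth, the same scaling shows that the paper's own assertion that ``$X_m$ is finite'' is not literally correct either; the published argument needs the same repair as yours.)

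The repair is to kill the scaling before counting: replace $X_m$ by the two slices $X_m^{\pm}=X_m\cap\{\kappa=\pm1\}$, viewed as algebraic sets in $\mathbb{R}^{q+n}$; after rescaling $w\mapsto\sqrt{|\kappa|}\,w$, every central configuration produces a point of $X_m^{+}$ or $X_m^{-}$ according to the sign of $\kappa$. On a slice your argument does go through: the association $(r,w)\mapsto(r,z,\Delta_0,m)$, with $z=w$ on $X_m^{+}$ and $z=iw$ on $X_m^{-}$ and $\Delta_0$ determined by the kernel relations, is injective and continuous, and it sends a central-configuration point into $V^{a}$ (Proposition \ref{pnew}), which is open in $\widetilde{V}^{a}$; therefore an entire neighborhood of that point in $X_m^{\pm}$ injects into the finite set $V^{a}\cap\pi^{-1}(m)$ of Theorem \ref{rp}, so the point is isolated and is its own component. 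Theorem \ref{MT} applied to each slice gives at most $2\beta(2\beta-1)^{q+n-1}\leq\beta(2\beta-1)^{q+n}$ components in total, as required. Note that this normalization also rescues your degree count: with $\kappa$ kept as a coordinate, the cleared relation for $a<0$ is $R_{ij}^{-2a}(1+KW_iW_j)-1$, of degree $-2a+3$ (and $\max\{2a,3\}$ for $a>0$), which exceeds the stated $\beta=\max\{-2a+2,2n\}$ whenever $-2a+2\geq 2n$; on the slices $\kappa=\pm1$ the degrees drop to $-2a+2$ (resp.\ $\max\{2a,2\}$), consistent with the theorem.
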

\begin{proof}We prove the Theorem for $a>0$. For the case $a<0$ the proof is analogous. Choose $m=(m_1,...,m_n)\in \mathbb{R}^{n}\setminus B$.
Every $(n-2)-$dimensional central configuration associated to $m$ belongs to the set
$$X_{m}=\{(r,\kappa,w)\in \mathbb{R}^{q+n+1}: F=0,~\tilde{g}^{a}_{ij}=0, ~\Omega_0=0,~\Omega_{i}-\Omega_{j}=0\},$$
for which $~1\leq i < j \leq n$, $\tilde{g}^{a}_{ij}=R_{ij}^{2a}-1-KW_iW_j$, $F$ is the determinant of the Cayley-Menger matrix $A(R)$, $\Omega_0=\sum_{k=1}^{n}m_kW_k$, $\Omega_i=\sum_{k=1}^n m_k W_k R_{kl}^{2}$  and the masses $m_1,...,m_n$ are constants. Note that $\tilde{g}^{a}_{ij}$, $F$, $\Omega_{0}$ and $\Omega_{i}$ belong to $\mathbb{R}[R_{12},...,R_{(n-1)n}, K, W_1,...,W_n]$. By Theorem \ref{newthm}, every point of $X_{m}$ is associated at  two points of $\pi^{-1}(m)$, so $X_{m}$ is finite. Observe that $\text{deg}(F)=2n$, $\text{deg}(\tilde{g}_{ij})=2a$, for every $a>0$, $1\leq i <j \leq n$, and $\text{deg}(\Omega_{j})\leq 3$ for every $ 0\leq j \leq n$. Since $n \geq 4$ implies $2n>2$, the maximum degree of the polynomials that define the set $X$ is $\beta=\text{max}\{2a,2n\}$. Observe that $\beta$ is independent of the choice of masses. By Theorem $\ref{MT}$, the number of connected components  of $X$ is at least $\beta(2\beta-1)^{q+n}$. Since $X_m$ is finite, every point of  $X$ determines your connected component. In particular,  every $(n-2)-$dimensional central configuration  associated to $m$ determines an unique connected component of $X_m$. This proves the result.
\end{proof}

\nocite{}
\bibliographystyle{alpha}
\bibliography{refs}

\end{document}